\theoremstyle{plain}
\newtheorem{theorem}{Theorem}[section]
\newtheorem{prop}[theorem]{Proposition}
\newtheorem{lma}[theorem]{Lemma}
\newtheorem{cor}[theorem]{Corollary}
\theoremstyle{definition}
\newtheorem{defn}[theorem]{Definition}
\newtheorem{ex}[theorem]{Example}
\newtheorem{rmk}[theorem]{Remark}
\newcommand{\N}{\mathbb{N}}
\newcommand{\Z}{\mathbb{Z}}
\newcommand{\gL}{\mathcal{L}}
\newcommand{\gR}{\mathcal{R}}
\newcommand{\gH}{\mathcal{H}}
\newcommand{\gD}{\mathcal{D}}
\newcommand{\gJ}{\mathcal{J}}
\newcommand{\K}{\mathcal{K}}
\newcommand{\Y}{\mathcal{Y}}
\newcommand{\B}{\mathcal{B}}
\newcommand{\A}{\mathcal{A}}
\newcommand{\VV}{\mathcal{V}}
\newcommand{\C}{\mathbf{C}}
\newcommand{\V}{\mathbf{V}}
\newcommand{\W}{\mathbf{W}}
\newcommand{\CR}{\mathbf{CR}}
\newcommand{\I}{\mathbf{I}}
\newcommand{\Sast}{\mathbf{S^{\ast}}}
\newcommand{\SI}{\mathbf{SI}}
\newcommand{\T}{\mathbf{T}}
\newcommand{\G}{\mathbf{G}}
\newcommand{\lat}{\mathscr{L}}
\newcommand{\lfin}{{\bf L}_{\bf F}}
\newcommand{\rfin}{{\bf R}_{\bf F}}
\newcommand{\hfin}{{\bf H}_{\bf F}}
\newcommand{\dfin}{{\bf D}_{\bf F}}
\newcommand{\jfin}{{\bf J}_{\bf F}}
\newcommand{\kfin}{{\bf K}_{\bf F}}
\newcommand{\gen}[1]{\left\langle #1\right\rangle}
\newcommand{\mapright}[1]{\stackrel{#1}{\longrightarrow}}
\newcommand{\mapleft}[1]{\stackrel{#1}{\longleftarrow}}
\newcommand{\inv}{^{-1}}
\newcommand{\dom}{\text{dom }}
\newcommand{\im}{\text{im }}
\newcommand{\id}{\text{id }\!}
\begin{document}

\setlength{\parindent}{0pt} \thispagestyle{empty}


\begin{flushleft}

\Large

\textbf{Local finiteness for Green relations in ($I$-)semigroup varieties}

\end{flushleft}


\normalsize

\vspace{-2.25mm}

\rule{\textwidth}{1.5pt}

\vspace{2mm}


\begin{flushright}

\large
\textsc{Pedro V. Silva}

\vspace{3pt}
\scriptsize
\textit{Centro de Matem\'{a}tica, Faculdade de Ci\^{e}ncias, Universidade do Porto,} \\
\textit{Rua Campo Alegre 687, 4169-007 Porto, Portugal} \\
\texttt{pvsilva@fc.up.pt}

\vspace{2mm}

\large
\textsc{Filipa Soares}

\vspace{3pt}
\scriptsize
\textit{\'Area Departamental de Matem\'atica,} \\
\textit{ISEL --- Instituto Superior de Engenharia de Lisboa, Instituto Polit\'ecnico de Lisboa,} \\
\textit{Rua Conselheiro Em\'idio Navarro 1, 1959-007 Lisboa, Portugal} \\
\& \hspace{2mm} \textit{CEMAT--CI\^ENCIAS,} \\
\textit{Dep. Matem\'atica, Faculdade de Ci\^encias, Universidade de Lisboa} \\
\textit{Campo Grande, Edif\'icio C6, Piso 2, 1749-016 Lisboa, Portugal} \\
\texttt{falmeida@adm.isel.pt}

\vspace{5mm}

\normalsize

\today

{\footnotesize Version: \texttt{green\_varieties\_10.pdf}}

\end{flushright}

\medskip

\begin{flushleft}

\small

{\bf Abstract.}
In this work, the lattice of varieties of semigroups
and the lattice of varieties of $I$-semigroups
(a common setting for both the variety of completely regular semigroups
and the variety of inverse semigroups)
are studied with respect to the following concepts:
a variety $\V$ of ($I$-)semigroups is said to be locally $\K$-finite,
where $\K$ stands for any of the five Green's relations,
if every finitely generated semigroup from $\V$ has only finitely many
(distinct) $\K$-classes.

\medskip

{\bf Keywords.} $\gH$/$\gL$/$\gR$/$\gD$/$\gJ$-finite semigroup,
locally finite variety,
locally $\gH$/$\gL$/$\gR$/$\gD$/$\gJ$-finite variety,
variety of ($I$-)semigroups.

\medskip

{\bf 2010 Mathematics Subject Classification.} 20M07, 20M10

\medskip

\date{\today}

\end{flushleft}

\bigskip

\section{Introduction} \label{sec:intro}

Varieties of semigroups have been a theme of research so widely and diversely
pursued that any listing of developements would fall inefficiently short
of the actual achievements.
Among some other special types, locally finite varieties of semigroups
play a major role in the study of varieties of semigroups,
with connections with many important problems,
both solved and open;
see, for example, \cite{Sap14}.

In this work, we consider a certain kind of generalizations
of the concept of locally finite variety that concerns one of the
key instruments in the study of semigroups --- the Green's relations ---
which we have termed $\K$-finite variety,
$\K$ being one of the five Green's relations.
Namely, we say that a variety $\V$ is \emph{locally $\K$-finite}
if each finitely generated semigroup belonging to $\V$
has but finitely many $\K$-classes.
Thus being, the problem of characterizing varieties with respect to these notions
intersects the Burnside Problem for semigroups,
which, in its standard formulation,
asks which varieties of semigroups contain infinite periodic
finitely generated semigroups.
As one might expect, it gives rise to some nontrivial questions.
This introdutory work establishes the basic properties of $\K$-finite varieties
and the classification of some of the most
relevant varieties of semigroups (more precisely, varieties of semigroups,
varieties of completely regular semigroups and varieties of inverse semigroups)
with respect to these notions.

The paper is organized as follows:
section~\ref{sec:bg} provides the necessary definitions and facts
on semigroups and varieties;
section~\ref{sec:classes} presents the notion of $\K$-finite semigroup,
for $\K\in \{\gH,\gL,\gR,\gD,\gJ\}$,
investigates the hierarchy of these notions,
and describes its behaviour with respect to the basic variety operators,
providing a number of examples;
finally, section~\ref{sec:vars} presents the definition of $\K$-finite variety,
establishes some of its properties,
and deals with the classification of the
varieties of completely regular semigroups, varieties of inverse semigroups,
varieties of $I$-strict semigroups, and varieties of semigroups
(subsections~\ref{subsec:CR}, \ref{subsec:INV}, \ref{subsec:SIS},
and \ref{subsec:SMG}, respectively).

\section{Background} \label{sec:bg}

A \emph{variety} is a class $\V$ of algebras, all of the same signature,
which is closed under taking subalgebras, homomorphic images, and direct products,
or, equivalently, that consists of all algebras that satisfy
a certain identity $u=v$ or set of identities
$\{u_{\lambda}=v_{\lambda}\}_{\lambda\in \Lambda}$
(written $\V=[u=v]$ and $\V=[u_{\lambda}=v_{\lambda}]_{\lambda\in \Lambda}$,
respectively).

If $\C$ is a class of algebras,
$\gen{\C}$ denotes the \emph{variety generated by} $\C$,
that is, the smallest variety that contains $\C$.
As usual, for $\C=\{A\}$, we write simply $\gen{A}$.
It is well known that

\begin{theorem} \label{T:var-characterization}
 Let $\C$ be a class of algebras and $A$ an algebra of the same signature
 that the algebras in $\C$.
 Then $A\in \gen{\C}$ if and only if there exist $C_{\lambda}\in \C$
 ($\lambda \in \Lambda$), a subalgebra $B\leq \Pi_{\lambda \in \Lambda} C_{\lambda}$,
 and an epimorphism $\psi\colon B \to A$.
\end{theorem}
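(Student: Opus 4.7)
The plan is to prove the two implications separately, letting $\VV = \VV(\C)$ denote the class of all algebras of the form described on the right-hand side, i.e.\ homomorphic images of subalgebras of products of members of $\C$.

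The ($\Leftarrow$) direction is immediate from the definition of a variety. If $A=\psi(B)$ with $B \leq \Pi_{\lambda\in\Lambda} C_\lambda$ and each $C_\lambda \in \C$, then every variety containing $\C$ contains the product $\Pi_{\lambda\in\Lambda} C_\lambda$ (closure under direct products), hence the subalgebra $B$ (closure under subalgebras), hence $A = \psi(B)$ (closure under homomorphic images). Since this holds for \emph{every} such variety, $A \in \gen{\C}$.

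For the ($\Rightarrow$) direction, the strategy is to show that $\VV$ is itself a variety; because $\C \subseteq \VV$ trivially (take a singleton index set, $B = C$, and $\psi = \id$), minimality of $\gen{\C}$ will then force $\gen{\C} \subseteq \VV$. Showing $\VV$ is a variety reduces to verifying that the single scheme $HSP$ absorbs further applications of $H$, $S$, and $P$. Closure under $H$ is routine: composing $\psi\colon B \to A$ with any surjection $A \to A'$ gives another epimorphism with the same domain. Closure under $S$ uses the standard pullback trick: given $A' \leq A = \psi(B)$, set $B' = \psi\inv(A')$; then $B' \leq B \leq \Pi C_\lambda$ and $\psi|_{B'}\colon B' \to A'$ is surjective, so $A' \in \VV$.

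The real work is closure under $P$, and I expect this to be the main obstacle, mainly due to index-set bookkeeping. Given a family $\{A_i\}_{i\in I}$ with each $A_i = \psi_i(B_i)$, $B_i \leq \Pi_{\lambda\in\Lambda_i} C_{i,\lambda}$, I would form the disjoint union $\Lambda = \bigsqcup_{i \in I} \Lambda_i$, view $\Pi_{i\in I}\Pi_{\lambda\in\Lambda_i} C_{i,\lambda}$ as $\Pi_{(i,\lambda)\in\Lambda} C_{i,\lambda}$, take $B = \Pi_{i\in I} B_i$ as a subalgebra of that product, and define $\psi\colon B \to \Pi_{i\in I} A_i$ componentwise by $\psi_i$ on the $i$-th factor. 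Then $\psi$ is a surjective homomorphism and $\Pi_{i\in I} A_i \in \VV$, completing the proof.
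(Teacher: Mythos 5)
Your argument is correct: it is the standard proof of Tarski's $HSP$ theorem, showing that the class of homomorphic images of subalgebras of products of members of $\C$ is itself a variety containing $\C$ and hence equals $\gen{\C}$, with the converse direction following from closure of any variety under the three operators. The paper states Theorem~\ref{T:var-characterization} as well known and offers no proof of its own, so there is nothing to compare against; your verifications of closure under $H$ (composition of surjections), $S$ (pulling back along $\psi$), and $P$ (re-indexing over a disjoint union of index sets) are exactly the classical route.
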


%
%

If $\V$ is a variety, then the set $\mathscr{L}(\V)$ of all subvarieties of $\V$
is a complete lattice under inclusion, with
$$\bigwedge_{\lambda\in \Lambda} \V_{\lambda}
  = \bigcap_{\lambda\in \Lambda} \V_{\lambda}$$
and
\begin{align*}
 \bigvee_{\lambda\in \Lambda} \V_{\lambda}
  &= \bigcap \; \, \{\W \subseteq \V \colon \V_{\lambda} \subseteq \W \text{ for all } \lambda\in \Lambda \} \\
  &= \{A \in \V \colon \text{there exist } \Gamma \subseteq \Lambda, \,
      V_{\gamma}\in \V_{\gamma} (\gamma \in \Gamma), \, B\leq \Pi_{\gamma \in \Gamma} V_{\gamma}, \\
  &  \qquad \qquad \quad \; \, \text{and an epimorphism } \psi\colon B \to A\} \, .
\end{align*}

\medskip

When dealing with semigroups in general, these are viewed as algebras with
a single (binary and associative) operation,
but some specific, yet important, classes of semigroups
are best considered as algebras of type
$(2,1)$, that is, semigroups endowed with a unary operation $a\mapsto a'$,
and amongst them is the \emph{variety of $I$-semigroups},
which is defined by the identities
\begin{equation} \label{Eq:varIsmgs}
 x(yz)=(xy)z \, , \; (x')' =x \, , \; xx'x=x
\end{equation}
(see \cite{How95}).
Within the variety of $I$-semigroups one finds, for example:
the subvariety \emph{of completely regular semigroups},
where the unary operation maps each element to the only inverse of $a$ with which $a$
commutes and is thus defined, in addition to (\ref{Eq:varIsmgs}),
by the identity
$$xx'=x'x \, ;$$
the subvariety \emph{of inverse semigroups},
where the unary operation maps each element to its (unique) inverse,
and which is defined, together with (\ref{Eq:varIsmgs}), by the identity
$$xx'yy'=yy'xx'$$
or, equivalently, by the identities
$$(xy)'=y'x' \, , \; xx'x'x=x'xxx' \, ;$$
and subvariety the \emph{strict $I$-semigroups},
considered by Petrich and Reilly in \cite{PetRei84} and
which is defined, in addition to (\ref{Eq:varIsmgs}), by the identities:
$$xx'x'x=x'xxx' \, , \; (xyx')(xyx')'=(xyx')'(xyx') \, ,$$
$$x(yz)'w=xz'y'w \, , \;  (xy)'=(x'xy)'(xyy')' \, .$$
These three varieties will be denoted by $\CR$, $\I$ and $\SI$,
respectively.\footnote{In \cite{PetRei84}, Petrich and Reilly have termed
strict $I$-semigroups by ``strict $^\ast$-semigroups''
and denoted the variety which includes them all by $\Sast$;
Howie~\cite{How95}, however, calls \emph{$\ast$-semigroup}
(or \emph{semigroup with involution})
to a unary semigroup obliged only to satisfy the identity $(xy)'=y'x'$,
which, as noted in \cite{PetRei84},
does not follow from the identities defining $\SI$.}
As usual, we will write $a\inv$ instead of $a'$ in the context of inverse semigroups
and denote by $a^0$ the (unique) idempotent in $H_a$
in the context of completely regular semigroups.

\medskip

Given a semigroup $S$, the Green's relations on $S$ are
(the equivalence relations) defined by: for all $a,b\in S$,
$$\begin{array}{rcrcl}
   a \gL b & \iff & S^1a \!	&= \!	&S^1b \\
   a \gR b & \iff & aS^1 \!	&= \!	&bS^1 \\
   a \gJ b & \iff & S^1aS^1 \!	&= \!	&S^1bS^1 \, ,
  \end{array}$$
along with
$$\gH = \gL \cap \gR \; \text{ and } \; \gD = \gL \vee \gR \, .$$
Recall that, since $\gL$ and $\gR$ always commute, we actually have
$$\gD = \gL \circ \gR = \gR \circ \gL$$
and that, within the case of regular semigroups, 
the superscript ``$1$'' in the definition of $\gL$, $\gR$, and $\gJ$ can be dropped.
Also recall that a semigroup is said to be \emph{combinatorial}
(respectively, \emph{cryptic}) if $\gH=1_S$ (respectively, if $\gH$ is a congruence).

An element $a$ in a semigroup $S$ has \emph{finite order}
if the monogenic subsemigroup $\gen{a}$ is finite
and has \emph{infinite order} otherwise.
A semigroup $S$ is \emph{periodic} if every element has finite order.
Note that, in case $S$ is an inverse semigroup,
it does not matter whether one considers the subsemigroup of $S$
generated by $a$ or the inverse subsemigroup generated by $a$:

\begin{lma}
 Let $S$ be an inverse semigroup.
 Then every monogenic subsemigroup of $S$ is finite
 if and only if every monogenic inverse subsemigroup of $S$ is finite.
\end{lma}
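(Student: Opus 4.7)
The $(\Leftarrow)$ direction is immediate since $\langle a\rangle\subseteq\langle a,a\inv\rangle$ for every $a\in S$. For the converse, fix $a\in S$; by hypothesis $\langle a\rangle$ is finite, so $a^n=a^{n+m}$ for some $n,m\geq 1$, and since $(a^k)\inv=(a\inv)^k$ in any inverse semigroup, also $(a\inv)^n=(a\inv)^{n+m}$. The task is to show finiteness of the inverse subsemigroup $T:=\langle a,a\inv\rangle$.

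The plan is to invoke the Wagner--Preston representation theorem to identify $S$ with an inverse subsemigroup of the symmetric inverse monoid $\mathcal{I}(X)$ for some set $X$. Then $a$ is a partial bijection on $X$ satisfying $a^n=a^{n+m}$, and it suffices to bound $T$ inside $\mathcal{I}(X)$. First I will decompose the support of $a$ in $X$ into orbits; each orbit is, a priori, a finite cycle, a finite chain (with endpoints outside $\dom a$ or $\im a$), a one-sided infinite ray, or a two-sided infinite chain. A direct case analysis will show that $a^n=a^{n+m}$ rules out both types of infinite orbits---along any infinite forward trajectory, $a^n$ and $a^{n+m}$ would send some point to two distinct points---forces finite cycles to have length dividing $m$, and bounds the length of every finite chain by $n$. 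In particular, $a$ exhibits only finitely many orbit isomorphism types.

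Second, since every element of $T$ is a word in $a$ and $a\inv$, its restriction to any given orbit depends only on the isomorphism type of that orbit (two orbits isomorphic via some $\phi$ have $a$-restrictions conjugate by $\phi$, hence so do all words in $a$ and $a\inv$). Fixing one representative orbit $X_\tau$ for each orbit type $\tau$ that actually occurs, the product of restrictions is an injective semigroup homomorphism from $T$ into the finite direct product $\prod_\tau \mathcal{I}(X_\tau)$; so $T$ is finite.

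The main obstacle will be the orbit case analysis ruling out infinite orbits and bounding cycle and chain lengths, together with the uniformity step that justifies collapsing arbitrarily many isomorphic orbits to a single representative factor; the remainder is routine bookkeeping.
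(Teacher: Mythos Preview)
Your argument is correct, but it takes a considerably longer route than the paper's. The paper's proof rests on a single structural fact about monogenic inverse semigroups: every element of $T=\langle a,a^{-1}\rangle$ can be written as $a^{l}a^{-m}a^{n}$ with $l,m,n\geq 0$ not all zero (the standard normal form inherited from the free monogenic inverse semigroup). Given this, finiteness of $\langle a\rangle$ and of $\langle a^{-1}\rangle$ immediately bounds the number of possible values of $a^{l}$, of $a^{-m}$, and of $a^{n}$, hence of their product, and the entire argument is three lines. Your Wagner--Preston and orbit-decomposition approach is also valid and is fully self-contained in that it avoids invoking the normal form; the orbit classification you outline (cycles of length dividing $m$, chains of length $<n$, no infinite orbits) even yields a concrete dynamical picture of $a$ as a partial bijection. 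But for the bare finiteness statement the normal-form route is far more economical; your machinery would be better spent in a context where that finer orbit structure is actually needed.
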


\begin{proof}
 Suppose every monogenic subsemigroup of $S$ is finite and
 let $a\in S$ and $T$ be the monogenic inverse subsemigroup of $S$
 generated be $a$.
 Then every element in $T$ is of the form $a^la^{-m}a^n$
 for some nonnegative integers $l$, $m$ and $n$ not all zero.
 Since both the monogenic subsemigroup of $S$ generated by $a$
 and the monogenic subsemigroup of $S$ generated by $a\inv$ are finite
 by assumption, it follows that $T$ has only finitely many elements.
 As for the converse, we have that, if $a\in S$,
 then the monogenic subsemigroup generated by $a$
 embeds in the monogenic inverse subsemigroup generated by $a$,
 hence the finiteness of the second implies the finiteness of the first.
\end{proof}

%

A semigroup $S$ is an \emph{epigroup} (or \emph{group-bound},
according to Howie in \cite{How95}), if, for every $a\in S$,
there exists a positive integer $m$ such that $a^m$ belongs to a subgroup of $S$.
In case $S$ is inverse, this is the same as existing a positive integer $m$
such that $a^ma^{-m}=a^{-m}a^m$.
We have

\begin{prop}[\cite{CliPre67}, Theorem~6.45] \label{P:D=J-epigroup}
 If $S$ is an epigroup, then $\gD=\gJ$.
\end{prop}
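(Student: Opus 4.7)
The plan is to use that $\gD \subseteq \gJ$ holds in every semigroup, so the real work is the reverse inclusion $\gJ \subseteq \gD$. Given $a \gJ b$, I would start by choosing $x, y, u, v \in S^1$ with $b = xay$ and $a = ubv$; substituting one into the other yields $a = (ux)\,a\,(yv)$, and a trivial induction then gives $a = (ux)^n\, a\, (yv)^n$ for every $n \geq 1$.

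Next I would invoke the epigroup hypothesis. Some power of $ux$ lies in a subgroup and some power of $yv$ lies in a subgroup; by passing to a common multiple, I can fix a single $n$ such that $(ux)^n \in H_e$ and $(yv)^n \in H_f$ for certain idempotents $e, f \in S$. Let $g$ and $h$ denote the group inverses of $(ux)^n$ and $(yv)^n$ inside $H_e$ and $H_f$. From $a = (ux)^n a (yv)^n$ one immediately reads off $ea = a = af$ (since $e$ and $f$ act as identities on the respective group powers), and a further cancellation yields $ga = a(yv)^n$ together with the dual identity $ah = (ux)^n a$.

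The heart of the proof is to establish $a \gR ay$, the symmetric argument giving $a \gL xa$. On the one hand, $ay \leq_\gR a$ is trivial; on the other, the factorisation $(yv)^n = y \cdot (v(yv)^{n-1})$ gives $a(yv)^n \leq_\gR ay$, while the computation $a(yv)^n \cdot h = a \cdot ((yv)^n h) = af = a$ shows $a \leq_\gR a(yv)^n$. Chaining $a \leq_\gR a(yv)^n \leq_\gR ay \leq_\gR a$ collapses everything into a single $\gR$-class, so $a \gR ay$; the dual chain with $(ux)^n$ on the left produces $a \gL xa$.

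To finish, since $\gR$ is a left congruence, $a \gR ay$ propagates to $xa \gR x(ay) = b$; combined with $a \gL xa$, this supplies the path $a \gL xa \gR b$, whence $a \gD b$. I expect the main obstacle to be the careful bookkeeping of the group-theoretic cancellation in the third step --- in particular, verifying that the factorisation of $(yv)^n$ through $y$ is legal (immediate for $n \geq 1$, modulo the usual vigilance when some of $u, v, x, y$ is the adjoined identity of $S^1 \setminus S$) --- but once these computations are in place the rest of the argument is essentially formal.
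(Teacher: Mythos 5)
Your argument is correct: the descent $a=(ux)^n a(yv)^n$, the passage to a power lying in a subgroup, and the chain $a\leq_\gR a(yv)^n\leq_\gR ay\leq_\gR a$ (with its left-sided dual) is exactly the classical proof of this fact. The paper itself offers no proof, citing Clifford--Preston, Theorem~6.45, and your write-up is essentially a faithful reconstruction of that standard argument, including the correct handling of the adjoined identity.
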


which includes, of course, the very well known case of periodic semigroups.

%
%
%
%

\medskip

A variety $\V$ of ($I$-)semigroups is \emph{combinatorial}
(respectively, \emph{cryptic}, \emph{periodic})
if it consists solely of combinatorial (respectively, cryptic, periodic)
($I$-)semigroups.

\medskip

For a subsemigroup $T$ of a semigroup $S$, we will denote, as usual,
the Green relation $\K$ in  $T$ by $\K^T$
(and, when necessary, the Green relation $\K$ in $S$ by $\K^S$);
thus, for example,
$$(a,b)\in \gL^S \iff S^1a=S^1b \quad \text{ while } \quad (a,b)\in \gL^T \iff T^1a=T^1b \, .$$
Clearly, $\gL^T \subseteq \gL^S \cap (T\times T)$
and similarly for the remaining Green's relations
--- inclusions that are strict in general.
However,
\begin{prop}[\cite{How95}, Proposition~2.4.2] \label{P:reg-subsmg}
 If $T$ is a regular subsemigroup of $S$, then
 $$ \gL^T=\gL^S \cap (T\times T) \, , \; \gR^T=\gR^S \cap (T\times T) \, \text{ and } \;
	\gH^T=\gH^S \cap (T\times T) \, .$$
\end{prop}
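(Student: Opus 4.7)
The plan is to establish the nontrivial inclusions $\gL^S \cap (T\times T) \subseteq \gL^T$ and $\gR^S \cap (T\times T) \subseteq \gR^T$ by using regularity of $T$ to ``internalise'' the witnesses from $S^1$ to $T^1$; the $\gH^T$ statement then falls out by intersection.

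The easy direction, $\gL^T \subseteq \gL^S \cap (T\times T)$, is immediate: if $T^1 a = T^1 b$, then the equalities $a = t_1 b$ and $b = t_2 a$ with $t_1, t_2 \in T^1$ are a fortiori equalities with $t_1, t_2 \in S^1$, giving $S^1 a = S^1 b$; and of course $(a,b) \in T \times T$. The $\gR$-version is identical with sides reversed.

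For the converse, suppose $a, b \in T$ with $a \gL^S b$, so $a = s_1 b$ and $b = s_2 a$ for some $s_1, s_2 \in S^1$. By regularity of $T$, fix inverses $a', b' \in T$ of $a$ and $b$ respectively. The identities $bb'b = b$ and $aa'a = a$ now do all the work:
$$a = s_1 b = s_1 (bb'b) = (s_1 b) b' b = ab'b, \qquad b = s_2 a = s_2 (aa'a) = ba'a.$$
Since $ab', ba' \in T$, these exhibit $a \in Tb$ and $b \in Ta$, whence $T^1 a = T^1 b$ and $a \gL^T b$. For the $\gR^T$ inclusion the mirror computation applies: from $a = b s_1$ one obtains $a = (bb'b) s_1 = b(b'a)$, and likewise $b = a(a'b)$, with $b'a, a'b \in T$. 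Finally, since $\gL^T$ and $\gR^T$ coincide with $\gL^S$ and $\gR^S$ on $T\times T$,
$$\gH^T = \gL^T \cap \gR^T = (\gL^S \cap \gR^S) \cap (T \times T) = \gH^S \cap (T\times T).$$

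There is no real obstacle; the sole idea is the insertion of an idempotent factor ($bb'$ or $aa'$, together with its mirror image) to absorb the uncontrolled witness $s_i \in S^1$ and leave behind one lying in $T$, which is exactly where regularity of $T$ is consumed.
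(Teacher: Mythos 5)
Your proof is correct and is exactly the classical argument: the paper itself states this result without proof, citing Howie's Proposition~2.4.2, and the proof given there is precisely your computation, absorbing the external witness $s_i\in S^1$ via $a=s_1(bb'b)=(ab')b$ and $b=(ba')a$ (with the mirror version for $\gR$), then intersecting for $\gH$. Nothing further is needed.
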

a property neither $\gD$ nor $\gJ$ need follow in general.


%
%


Another important property of completely regular semigroups concerning Green's relations is

\begin{theorem}[\cite{PetRei99}, Theorem~II.4.5] \label{T:D=JinCR}
 In any completely regular semigroup, $\gD=\gJ$.
\end{theorem}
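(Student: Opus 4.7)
The plan is to derive this as an immediate corollary of Proposition~\ref{P:D=J-epigroup}, by observing that every completely regular semigroup is an epigroup.

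Recall that a semigroup $S$ is completely regular precisely when every element lies in some subgroup of $S$; equivalently, every $\gH$-class of $S$ is a group. Given $a \in S$, let $H_a$ be the $\gH$-class of $a$, which by assumption is a subgroup of $S$. Then $a = a^1$ already belongs to a subgroup of $S$, so the defining condition of an epigroup is satisfied trivially with $m=1$ for every element. Hence every completely regular semigroup is an epigroup.

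By Proposition~\ref{P:D=J-epigroup}, it follows at once that $\gD = \gJ$ in $S$, as required.

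Since the argument is essentially a one-line reduction to an already-cited result, there is no real obstacle: the only point worth being explicit about is that the notion of completely regular semigroup in the present paper (an $I$-semigroup satisfying $xx' = x'x$, so that $a^0 = a a' = a' a$ is the identity of $H_a$) indeed guarantees that each $a$ lies in the subgroup $H_a$, and hence qualifies trivially as an epigroup element. Once that is stated, Proposition~\ref{P:D=J-epigroup} finishes the proof.
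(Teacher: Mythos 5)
Your proof is correct. The paper does not actually prove this statement --- it is quoted directly from Petrich and Reilly with no argument supplied --- so there is nothing to compare line by line; but your reduction is sound and self-contained within the paper's own toolkit: a completely regular semigroup is a union of groups, so every element lies in a subgroup already at the first power, the semigroup is therefore an epigroup, and Proposition~\ref{P:D=J-epigroup} yields $\gD=\gJ$. This is in fact the standard way to see the result, and it has the small advantage of exhibiting Theorem~\ref{T:D=JinCR} as a special case of the epigroup statement the paper already cites, rather than as an independent import.
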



%
%


A \emph{band} is a semigroup in which every element is an idempotent.

A band $B$ is \emph{regular} if $axya=axaya$, for all $a,x,y\in B$,
and \emph{left regular} if $ax=axa$, for all $a,x\in B$.
\emph{Right regular bands} are defined dually.
A band $B$ is \emph{normal} if $axya=ayxa$, for all $a,x,y\in B$.

%

\medskip

A semigroup is a \emph{cryptogroup} if it is cryptic and completely regular.

A semigroup $S$ is \emph{orthodox} if it is regular and its idempotents form a subsemigroup.
An orthodox completely regular semigroup is called an \emph{orthogroup}
and an orthodox cryptogroup (that is, a completely regular semigroup
which is both cryptic and orthodox) is called an \emph{orthocryptogroup}.

If $S$ is an orthogroup and $E(S)$ is a (left, right) regular band,
we say that $S$ is a \emph{(left, right) regular orthogroup}.
Similarly, \emph{regular orthocryptogroups} are 
orthocryptogroups with a regular band of idempotents
and \emph{normal orthocryptogroups} are 
orthocryptogroups with a normal band of idempotents.


\medskip

A semigroup $S$ is called a \emph{band of semigroups} $S_{\alpha}$,
with $\alpha \in \Y$, if $S$ is a disjont union of the semigroups $S_{\alpha}$
and the corresponding partition is a congruence.
Thus being, the quotient semigroup of $S$ over this congruence, $\Y$, is a band.
If this band is in addition commutative,
we say that $S$ is a \emph{semilattice of semigroups}.
In either case, we write $S=(Y;S_{\alpha})$.
If all semigroups are of some special type,
we say that $S$ is a band (respectively, semilattice)
of semigroups of that particular type.
For example, it can be shown that \emph{Clifford semigroups},
that is, completely regular semigroups in which any idempotent
commutes with any element, are semilattices of groups.

For cryptogroups, we have:

\begin{theorem}
[\cite{PetRei99}, Theorem~II.8.1]
 The following conditions on a semigroup $S$ are equivalent:
 \begin{itemize}
  \item[(i)] $S$ is a cryptogroup
  \item[(ii)] $S$ is a band of groups.
 \end{itemize}
\end{theorem}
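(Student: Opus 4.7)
I would prove the two implications separately, exploiting throughout that in a completely regular semigroup the $\gH$-classes coincide with the maximal subgroups, and that in a band every $\gH$-class is a singleton (any idempotent element in a group forces the group to be trivial).

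\emph{Direction} (i)$\Rightarrow$(ii). Suppose $S$ is a cryptogroup. Because $S$ is completely regular, every $\gH$-class of $S$ is a maximal subgroup, and because $\gH$ is a congruence, $Y := S/\gH$ is a well-defined semigroup. For any $a \in S$, the elements $a$ and $a^2$ sit in the same maximal subgroup, hence in the same $\gH$-class, so $[a]_\gH [a]_\gH = [a^2]_\gH = [a]_\gH$ in $Y$; thus every element of $Y$ is idempotent, i.e.\ $Y$ is a band. Writing $S_\alpha$ for the $\gH$-class indexed by $\alpha \in Y$, the presentation $S = \bigsqcup_{\alpha \in Y} S_\alpha$ exhibits $S$ as a band of groups.

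\emph{Direction} (ii)$\Rightarrow$(i). Write $S = (Y;G_\alpha)$ with each $G_\alpha$ a group and let $\varphi \colon S \to Y$ be the canonical epimorphism onto the band. Every $a \in S$ belongs to the subgroup $G_{\varphi(a)}$, so $S$ is completely regular. To see that $\gH^S$ is a congruence, I would show that the $\gH^S$-classes of $S$ coincide with the $G_\alpha$. Since each $G_\alpha$ is a regular (indeed, group) subsemigroup of $S$, Proposition~\ref{P:reg-subsmg} yields $\gH^{G_\alpha} = \gH^S \cap (G_\alpha \times G_\alpha)$; as all elements of a group are $\gH$-equivalent in it, each $G_\alpha$ is contained in a single $\gH^S$-class. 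Conversely, Green's relations are preserved by surjective homomorphisms, so $a \gH^S b$ implies $\varphi(a) \gH^Y \varphi(b)$; but $\gH^Y$ is the identity on the band $Y$, forcing $\varphi(a) = \varphi(b)$, i.e.\ $a$ and $b$ lie in the same $G_\alpha$. The two partitions thus agree, and since the $G_\alpha$-partition is a congruence by hypothesis, so is $\gH^S$; hence $S$ is a cryptogroup.

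The only genuinely delicate point is in the (ii)$\Rightarrow$(i) direction, namely the identification of the $\gH^S$-classes with the $G_\alpha$: one must combine the regular-subsemigroup fact (Proposition~\ref{P:reg-subsmg}) with the triviality of $\gH$ on a band to obtain both inclusions. Once this identification is in place, the congruence property of $\gH^S$ transfers directly from that of the given band decomposition, and the remainder of the argument is routine unwinding of definitions.
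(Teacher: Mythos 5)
Your proof is correct, and it is the standard argument for this classical result; the paper itself gives no proof, citing it directly from Petrich and Reilly (Theorem~II.8.1), so there is nothing to diverge from. Both directions are sound: in (i)$\Rightarrow$(ii) you correctly use that the $\gH$-classes of a completely regular semigroup are its maximal subgroups and that $a\,\gH\,a^2$ forces the quotient to be a band, and in (ii)$\Rightarrow$(i) the identification of the $\gH^S$-classes with the $G_\alpha$ via Proposition~\ref{P:reg-subsmg} together with the triviality of $\gH$ on the band $Y$ (an $\gH$-class contains at most one idempotent) is exactly the right way to transfer the congruence property.
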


Thus, cryptogroups can be represented in the form $S=(\B;S_{\beta})$,
where $\B$ is the band $S/\gH$ and each $S_{\beta}$ is an $\gH$-class of $S$ and a group.

It is well known that all the above mentioned classes of completely regular semigroups
are subvarieties of $\CR$;
for them, we will use the following notation:
$$\begin{array}{ll}
   \mathbf{OG}		&\text{orthogroups} \\
   \mathbf{RO}		&\text{regular orthogroups} \\
   \mathbf{ROL^*}	&\text{regular orthogroups in which $\gL$ is a congruence} \\
   \mathbf{ROR^*}	&\text{regular orthogroups in which $\gR$ is a congruence} \\
   \mathbf{LRO}		&\text{left regular orthogroups} \\
   \mathbf{RRO}		&\text{right regular orthogroups}
  \end{array}
$$

$$\begin{array}{ll}
   \mathbf{BG}		&\text{cryptogroups} \\
   \mathbf{OBG}		&\text{orthocryptogroups} \\
   \mathbf{ROBG}	&\text{regular orthocryptogroups} \\
   \mathbf{ONBG}	&\text{normal orthocryptogroups} \\
   \mathbf{SG}		&\text{Clifford semigroups}.
  \end{array}
$$

We say that a semigroup $L$ in which $xy=x$ for all $x,y \in L$
is a \emph{left zero semigroup}
and that a completely regular semigroup $S$ in which $\gL=S\times S$
is a \emph{left group}.
In the sequel, the following will be of use:

\begin{lma}[\cite{PetRei99}, Proposition~III.5.9] \label{L:LG}
 The following are equivalent on a completely regular semigroup:
 \begin{itemize}
  \item[(i)] $S$ is a left group;
  \item[(ii)] $S$ is isomorphic to the direct product of a left zero semigroup and a group.
 \end{itemize}
\end{lma}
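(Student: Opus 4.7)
The plan is to prove $(ii) \Rightarrow (i)$ by direct computation and $(i) \Rightarrow (ii)$ by realizing $S$ as an internal product $E(S) \cdot H_{e_0}$ for any chosen idempotent $e_0$.

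For $(ii) \Rightarrow (i)$, if $S = L \times G$ with $L$ a left zero semigroup and $G$ a group, then $(l, g^{-1})$ is an inverse of $(l, g)$ commuting with it, so $S$ is completely regular; and $S(l_1, g_1) = L \times G = S(l_2, g_2)$ for all $(l_i, g_i)$, giving $\gL = S \times S$.

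For $(i) \Rightarrow (ii)$, I would first establish two preparatory facts. Since $\gL = S \times S$, any two idempotents $e, f \in E(S)$ are $\gL$-related, so $e = xf$ for some $x \in S$ gives $ef = xf^2 = xf = e$, and symmetrically $fe = f$; thus $E := E(S)$ is a left zero semigroup. Moreover $\gH = \gL \cap \gR = \gR$, so each $\gR$-class coincides with an $\gH$-class and hence, by complete regularity, is a group containing a unique idempotent. Fixing an idempotent $e_0$ and setting $G := H_{e_0}$, the candidate isomorphism is $\psi \colon E \times G \to S$, $\psi(l, g) = lg$; recall also that every $a \in S$ has a group inverse $a^{-1}$ in $H_{a^0}$ satisfying $a a^{-1} = a^{-1} a = a^0$.

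The technical heart of the argument, which I expect to be the main hurdle, is the homomorphism property of $\psi$; it reduces to proving $gl = g$ for all $g \in G$ and $l \in E$, which follows from the single computation
$$gl = (g e_0) l = g (e_0 l) = g e_0 = g,$$
combining the left-zero law $e_0 l = e_0$ with the identity $g e_0 = g$ in $G$, and instantly yields $\psi((l_1, g_1)(l_2, g_2)) = l_1 g_1 g_2 = l_1 g_1 l_2 g_2 = \psi(l_1, g_1) \psi(l_2, g_2)$. Bijectivity is then bookkeeping: for injectivity, one observes that $lg \gR l$ (since $lg \cdot g^{-1} = l e_0 = l$), so $lg \in H_l$, forcing $l_1 = l_2$ from $\psi(l_1, g_1) = \psi(l_2, g_2)$ by uniqueness of idempotents in $\gH$-classes, after which left-multiplying by $e_0$ and using $e_0 l_1 = e_0$ extracts $g_1 = g_2$; for surjectivity, given $a \in S$ one sets $l := a^0$ and $g := e_0 a$, verifies $g \in H_{e_0}$ via $g \cdot a^{-1} = e_0 a^0 = e_0$, and observes $\psi(l, g) = l e_0 a = la = a$.
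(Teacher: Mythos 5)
Your proof is correct. The paper does not prove this lemma at all --- it is quoted verbatim from Petrich and Reilly (\cite{PetRei99}, Proposition~III.5.9) --- and your argument (deriving the left-zero law on $E(S)$ from $\gL=S\times S$, noting $\gR=\gH$ so that $\gR$-classes are groups, and exhibiting the isomorphism $E(S)\times H_{e_0}\to S$, $(l,g)\mapsto lg$, via the identity $gl=g$) is essentially the standard one found there.
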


\begin{lma}[\cite{PetRei99}, Lemma~V.3.1] \label{L:ROL}
 The following are equivalent on a completely regular semigroup:
 \begin{itemize}
  \item[(i)] $S \in \mathbf{LRO}$;
  \item[(ii)] $S$ is a semilattice of left groups.
 \end{itemize}
\end{lma}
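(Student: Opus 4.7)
The plan is to prove both directions by exploiting the classical semilattice decomposition of any completely regular semigroup into its completely simple $\gD$-classes, which is available thanks to the equality $\gD=\gJ$ on completely regular semigroups (Theorem~\ref{T:D=JinCR}).

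For (i) $\Rightarrow$ (ii), I would take $S\in \mathbf{LRO}$ and write $S=(Y;S_{\alpha})$ as the semilattice of its $\gD$-classes, each of which is a completely simple semigroup. The claim is that each component $S_{\alpha}$ is a left group. Orthodoxy of $S$ gives that $E(S)$ is a subsemigroup, so $E(S_{\alpha})=E(S)\cap S_{\alpha}$ is a subband of $E(S)$, inheriting its left regularity. On the other hand, $E(S_{\alpha})$ is the band of idempotents of a completely simple semigroup, which is a rectangular band: hence $efe=e$ for all $e,f\in E(S_{\alpha})$. Combining this with left regularity ($ef=efe$) forces $ef=e$, so $E(S_{\alpha})$ is a left zero band. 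A completely simple semigroup whose idempotents form a left zero band is (by an immediate Rees-matrix calculation, after normalising the sandwich matrix) isomorphic to the direct product of a left zero semigroup and a group, hence a left group by Lemma~\ref{L:LG}. Thus $S$ is a semilattice of left groups.

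For (ii) $\Rightarrow$ (i), I would take $S=(Y;S_{\alpha})$ with each $S_{\alpha}$ a left group. Since every left group is completely simple, $S$ is a semilattice of completely simple semigroups and thus completely regular; moreover, since elements from distinct $S_{\alpha}$ map to distinct elements of the semilattice $Y$, they lie in distinct $\gJ$-classes of $S$, so this decomposition coincides with the $\gD=\gJ$-decomposition. By Lemma~\ref{L:LG}, each $S_{\alpha}$ factors as $L_{\alpha}\times G_{\alpha}$, so $E(S_{\alpha})=L_{\alpha}\times\{1_{\alpha}\}$ is a left zero band. It remains to check that $E(S)=\bigsqcup_{\alpha}E(S_{\alpha})$ is a subsemigroup (orthodoxy) and a left regular band. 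Given $e\in E(S_{\alpha})$ and $f\in E(S_{\beta})$, both $ef$ and $efe$ sit in the left group $S_{\alpha\beta}$; I would show that $ef\in E(S_{\alpha\beta})$ and $ef=efe$ by exploiting the fact that idempotents act as right identities inside each left group $S_{\alpha}$, together with the compatibility of the multiplication across components. Once orthodoxy is in hand, left regularity of the resulting band $E(S)$ reduces to the observation that its $\gD$-classes are the $E(S_{\alpha})$, each a left zero semigroup: a band is left regular precisely when it is a semilattice of left zero semigroups.

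The main obstacle is in the reverse implication, specifically in controlling products of idempotents coming from distinct components $S_{\alpha}$ and $S_{\beta}$: unlike in the Clifford (semilattice-of-groups) setting, where the idempotents automatically form a semilattice, here the cross-component structure is not immediately transparent. The cleanest route is to translate the claim into the identity $x^{0}y^{0}x^{0}=x^{0}y^{0}$ (which, together with complete regularity, characterises $\mathbf{LRO}$) and to verify it by appealing to an explicit structural description of a semilattice of left groups via connecting homomorphisms between the components. The forward direction, by contrast, is essentially a Rees-matrix calculation once the decomposition into completely simple $\gD$-classes is fixed.
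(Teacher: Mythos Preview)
The paper does not prove this lemma at all: it is quoted verbatim from \cite[Lemma~V.3.1]{PetRei99} as background, with no accompanying argument. So there is no ``paper's own proof'' to compare against, and your proposal must be judged on its own merits.

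Your forward direction (i)~$\Rightarrow$~(ii) is correct and is the standard argument: decompose $S$ into its $\gD$-classes (completely simple), observe that $E(S_\alpha)$ is simultaneously rectangular and left regular, hence left zero, and invoke Lemma~\ref{L:LG}.

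For (ii)~$\Rightarrow$~(i) you have correctly located the only genuine difficulty --- controlling products $ef$ with $e\in E(S_\alpha)$, $f\in E(S_\beta)$ --- and your reduction at the end (once $E(S)$ is a band, its $\gD$-classes are the left zero bands $E(S_\alpha)$, whence left regularity) is fine. The weak point is the appeal to ``an explicit structural description of a semilattice of left groups via connecting homomorphisms''. A general semilattice of left groups is not \emph{a priori} a strong semilattice, and the existence of connecting homomorphisms is essentially equivalent to what you are trying to prove; invoking it risks circularity. What actually does the work is more elementary and does not require any such structure theorem: for $e\in E(S_\alpha)$ and $\gamma\le\alpha$, right multiplication $\rho_e\colon S_\gamma\to S_\gamma$, $x\mapsto xe$, is a left $S_\gamma$-module map, and on a left group $L_\gamma\times G_\gamma$ every such map has the form $(l,g)\mapsto(l,gc)$ for some fixed $c\in G_\gamma$; idempotency of $e$ forces $c^2=c$, hence $c=1$, so $xe=x$ for all $x\in S_\gamma$. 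Applying this with $x=ef$ gives $efe=ef$, and with $x=fe$ gives $(ef)^2=e(fe)f=e(fe)=efe=ef$. This fills the gap cleanly without any appeal to connecting maps.
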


\begin{theorem}[\cite{PetRei99}, Theorem~V.6.2] \label{T:ROLstar}
 The following are equivalent on a completely regular semigroup:
 \begin{itemize}
  \item[(i)] $S \in \mathbf{ROL^*}$;
  \item[(ii)] $S$ embeds in a direct product of a left regular orthogroup
   and a right regular band;
  \item[(iii)] $S$ satisfies the identity $x(y^0z)^0x=xy^0x^0z^0x$.
 \end{itemize}
\end{theorem}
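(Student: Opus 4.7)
The plan is to establish the cycle (i) $\Rightarrow$ (ii) $\Rightarrow$ (iii) $\Rightarrow$ (i).

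For (ii) $\Rightarrow$ (iii), I would verify the identity $x(y^0z)^0x = xy^0x^0z^0x$ in both $\mathbf{LRO}$ and in the variety of right regular bands; since identities are preserved under direct products and subalgebras, it then holds in any $S$ as in (ii). In a right regular band, $x^0=x$ for all $x$, so $(y^0z)^0=yz$, and two applications of the defining identity $uvu=vu$ show both sides equal $yzx$. In $\mathbf{LRO}$, Lemma~\ref{L:ROL} represents $S$ as a semilattice of left groups $(Y;S_\alpha)$ and Lemma~\ref{L:LG} identifies each $S_\alpha$ with a direct product of a left zero semigroup and a group; after projecting all variables to the component $S_{\alpha\wedge\beta\wedge\gamma}$, one checks that $(y^0z)^0$ and $y^0z^0$ coincide there (both of the form $(l_y,e)$ in left-zero-times-group coordinates), so that both sides evaluate to the same element $(l_x,g_x^2)$.

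For (iii) $\Rightarrow$ (i), assume the identity in the completely regular semigroup $S$. I would extract the three properties needed for $\mathbf{ROL^*}$ membership in turn: (a) orthodoxy of $E(S)$, by substituting carefully chosen products of idempotents into the identity and using $(fg)(fg)^0=fg$ to force $(fg)^0=fg$; (b) the regular band identity $axya=axaya$ on $E(S)$, by substituting idempotents into the identity and simplifying with the already-established orthodoxy; (c) $\gL$ being a congruence, by using the identity to verify left compatibility, since $\gL$ is automatically a right congruence in any semigroup.

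For (i) $\Rightarrow$ (ii), take $S\in\mathbf{ROL^*}$ and build two congruences on $S$: $\sigma_1$, the smallest congruence whose quotient lies in $\mathbf{LRO}$, and $\sigma_2$, the smallest congruence whose quotient is a right regular band. Both exist because the corresponding target classes are subvarieties of $\CR$. The natural homomorphism $S\to S/\sigma_1\times S/\sigma_2$ is an embedding exactly when $\sigma_1\cap\sigma_2=1_S$; to see this, one describes $\sigma_1$ in terms of the $\gL$-congruence on $S$ and $\sigma_2$ in terms of the $\gR$-congruence on the regular band $E(S)$, and then exploits the interplay between the $\gL$-congruence assumption on $S$ and the regular band structure of $E(S)$ to show the intersection is trivial.

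The main obstacle I anticipate is the triviality of $\sigma_1\cap\sigma_2$ in (i) $\Rightarrow$ (ii), which is the structural heart of the subdirect decomposition; in the same spirit, the substitutions required in (iii) $\Rightarrow$ (i) to extract orthodoxy, the regular band identity, and the $\gL$-congruence from a single identity will be the other main technical challenge.
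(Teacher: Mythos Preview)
The paper does not contain a proof of this theorem: it is quoted verbatim as Theorem~V.6.2 from Petrich and Reilly's monograph \cite{PetRei99} and used as a black box (to classify $\mathbf{ROL^*}$ in Theorem~\ref{T:CRclassification} and to check that the semigroup $P$ of Example~\ref{E:Pedro2} lies in $\mathbf{ROL^*}$). There is therefore no in-paper proof to compare your proposal against.

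That said, your outline follows the standard architecture for such results in \cite{PetRei99}: verify the identity in each factor variety for (ii)~$\Rightarrow$~(iii), extract the structural conditions by substitutions for (iii)~$\Rightarrow$~(i), and produce a subdirect embedding via the least $\mathbf{LRO}$-congruence and the least right-regular-band congruence for (i)~$\Rightarrow$~(ii). You are right that the crux is the triviality of $\sigma_1\cap\sigma_2$; in Petrich--Reilly this is handled not by ad hoc descriptions of $\sigma_1$ and $\sigma_2$ but via their general machinery of $\V$-congruences and the operators on the congruence lattice, so if you want to complete (i)~$\Rightarrow$~(ii) rigorously you should either consult that framework or be prepared for a nontrivial direct computation.
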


As noted in \cite{PetRei99},
it follows from Lemma~\ref{L:ROL} that $S\in \mathbf{LRO}$
if and only if $\gL=\gD$ on $S$
and from Theorem~\ref{T:ROLstar} that $\mathbf{ROL^*} = \mathbf{LRO}\vee \mathbf{ROBG}$.

\medskip

%

\medskip

We now recall some notions about free inverse semigroups
and Sch\"{u}tzenberger graphs.

Let $A$ be a nonempty alphabet and let $A\inv$ be the set of formal inverses of $A$.
If $a\in A\cup A\inv$, we say that the edge $q \mapright{a\inv} p$
is the inverse of the edge $p \mapright{a} q$.
An automaton is said to be \emph{inverse} if
it is deterministic, has a single initial vertex and a single final vertex,
its underlying graph is connected,
and its edge set is closed under inversion.

Given $u=a_1\ldots a_n$, with each $a_i \in A\cup A\inv$,
the \emph{linear automaton $L(u)$ of $u$} is defined as the inverse automaton
\[
 \parbox{4cm}{
  \begin{pspicture}(0,0)(4,1)
   \psdot(0,.4)
   \psdot(1,.4)
   \psdot(2,.4)
   \psdot(3,.4)
   \psdot(4,.4)
   \psline[linewidth=.4pt,arrows=->](-.3,.8)(-.05,.45)
   \psline[linewidth=.4pt,arrows=->](4,.4)(4.3,.8)
   \psline[linewidth=.4pt,arrows=->](0,.4)(.925,.4)
   \psline[linewidth=.4pt,arrows=->](1,.4)(1.925,.4)
   \psline[linewidth=.4pt,arrows=->](3,.4)(3.925,.4)
   \uput[u](.5,.35){\small $a_1$}
   \uput[u](1.5,.35){\small $a_2$}
   \rput(2.55,.4){\small $\cdots$}
   \uput[u](3.5,.35){\small $a_n$}
 \end{pspicture}}
\]
(recall that the inverse edges are not depicted)
and the \emph{Munn tree $MT(u)$} is the deterministic automaton obtained from $L(u)$
by sucessively folding all pairs of edges of the form
$q \mapleft{a} p \mapright{a} r$ with $a\in A\cup A\inv$ (see~\cite{Mun74}).
As is well-known, the process is confluent and the
\emph{free inverse semigroup on $A$}
can be viewed as the quotient $FIS_A=(A\cup A\inv)^+/\rho$,
where $\rho$ is the congruence on $(A\cup A\inv)^+$ defined by
$$u\rho v \Longleftrightarrow MT(u) \approx MT(v) \, ,$$
for all $u,v \in (A\cup A\inv)^+$.
Given $u \in (A\cup A\inv)^+$, we denote by $\bar{u}$ the reduced form of $u$,
obtained by deleting all factors of the form $aa\inv$, with $a\in A\cup A\inv$;
then,
$$u\rho \in E(FIS_A) \Longleftrightarrow \bar{u}=1$$
holds for every $u \in (A\cup A\inv)^+$.

Let ${\rm InvS} \gen{A \mid R}$ be an inverse semigroup presentation;
thus, $R\subseteq (A\cup A\inv)^+ \times (A\cup A\inv)^+$
and the inverse semigroup defined by the presentation is the quotient
$(A\cup A\inv)^+/\tau$, where $\tau$ is the congruence generated by $\rho\cup R$.
Given $u \in (A\cup A\inv)^+$,
the \emph{Sch\"{u}tzenberger automaton $\A(u)$ of $u$}
has vertex set $R_{u\tau}$,
edges $v\tau \mapright{a} (va)\tau$
(whenever $a\in A\cup A\inv$ and $v\tau,(va)\tau \in R_{u\tau}$, evidently),
initial vertex $(uu\inv)\tau$, and final vertex $u\tau$.
As shown in \cite{JBS90}, these are inverse automata and provide a solution
for the word problem through the equivalence
$$u\tau v \Longleftrightarrow \A(u) \approx \A(v) \, .$$
The underlying graph of the Sch\"{u}tzenberger automaton $\A(u)$
is called the \linebreak \emph{Sch\"{u}tzenberger graph} of $u$, and denoted $S\Gamma(u)$.
These can be very useful when dealing with Green's relations;
for example, the set of $\gD$-classes of the semigroup $(A\cup A\inv)^+/\tau$
can be identitied with the set of isomorphism classes of the
Sch\"{u}tzenberger graphs of ${\rm InvS} \gen{A \mid R}$.

In general, Sch\"{u}tzenberger graphs are not effectively constructible,
but an inductive procedure has been designed by Stephen to approximate them.
For \linebreak $u\in (A\cup A\inv)^+$,
define the \emph{Sch\"{u}tzenberger sequence $(S\Gamma_n(u))_n$} as follows.
Starting with $S\Gamma_1(u)=MT(u)$, and provided $S\Gamma_n(u)$ is defined,
consider the automaton $S\Gamma'_n(u)$ obtained from $S\Gamma_n(u)$
by performing all possible \emph{$R$-expansions}, that is:
if $(r,s)\in R\cup R\inv$ and there exists a path $p\mapright{r}q$ in $S\Gamma_n(u)$,
but no path $p\mapright{s}q$, adjoin one such path.
Then, take $S\Gamma_{n+1}(u)$ to be the automaton obtained from $S\Gamma'_n(u)$
by identitying all pairs of edges $q \mapleft{a} p \mapright{a} r$
with $a\in A\cup A\inv$.
Although it needn't be always the case,
in many circumstances each automaton $S\Gamma_n(u)$ can effectively constructed
(for example, when $R$ is finite).
Using an appropriate quasi-order in the class of inverse automata,
the Sch\"{u}tzenberger automaton can be viewed as the direct limit
of the Sch\"{u}tzenberger sequence.

With respect to inverse semigroups, we will denote by:
\begin{itemize}
 \item $B=\N_0\times \N_0$ the bicyclic monoid, whose operation is defined by
  $$(m,n)(p,q)=(m-n+n\vee p,q-p+n\vee p)\,,$$
  for all $(m,n),(p,q)\in B$
  (where $n\vee p=\max\{n,p\}$);
 \item $B_2$ the five element Brandt semigroup,
  which can be defined by the inverse semigroup presentation
  $B_2 = {\rm InvS} \gen{a \mid a^2=0} \, ;$
 \item $FIS_a$ the free monogenic inverse semigroup;
 \item $M_n$, for $n$ a positive integer, the Rees quotient $FIS_a/I_n$
  where $I_n$ is the ideal of $FIS_a$ generated be $a^n$.
\end{itemize}

Concerning subvarieties of $\I$, the following notation will be used:
$$\begin{array}{ll}
   \T				&\text{trivial semigroups} \\
   \G				&\text{groups} \\
   \mathbf{SL}			&\text{semilattices} \\
   \mathbf{C_m}			&[x^m=x^{m+1}], \text{ for any positive integer $m$} \\
   \left[x^n\in G\right]	&[x^nx^{-n}=x^{-n}x^n], \text{ for any positive integer $n$.}
  \end{array}
$$

It is well known that the lattice of subvarieties of $\I$
contains an ideal with three isomorphic layers:
(i) $\V$, (ii) $\V\vee \mathbf{SL}$, and (iii) $\V\vee \gen{B_2}$,
where $\V$ runs over all varieties of groups (see Figure~\ref{F:Iclassification}).
Semigroups in these varieties are often called \emph{strict inverse semigroups}
(which should not be confounded with strict $I$-semigroups!).
We also recall

\begin{theorem}[\cite{Pet84}, Theorem~XII.4.11] \label{T:FISa=B}
 $\gen{FIS_a}=\gen{B}$.
\end{theorem}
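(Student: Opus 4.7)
The plan is to establish the two inclusions $\gen{B} \subseteq \gen{FIS_a}$ and $\gen{FIS_a} \subseteq \gen{B}$ separately; only the second is substantive.

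For the easy direction, I would observe that $B$ is monogenic as an inverse semigroup: taking $a = (0,1)$, one has $a\inv = (1,0)$ and every element $(m,n) \in B$ equals $a^{-m}a^n$. The universal property of $FIS_a$ then yields a surjective inverse semigroup homomorphism $FIS_a \to B$ sending the free generator to $a$; so $B$ is a homomorphic image of $FIS_a$, giving $\gen{B} \subseteq \gen{FIS_a}$.

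For the converse inclusion, the plan is to exhibit $FIS_a$ as a sub-inverse-semigroup of $B \times B$, from which the conclusion follows via Theorem~\ref{T:var-characterization}. I would take $\phi \colon FIS_a \to B \times B$ to be the unique inverse semigroup homomorphism determined by sending the free generator to $((0,1),(1,0))$, and then verify injectivity by exploiting the Munn tree description: each $w \in FIS_a$ is uniquely specified by a triple $(p, q, k)$ with $p, q \geq 0$, $-p \leq k \leq q$, and $(p,q,k) \neq (0,0,0)$, encoding the Munn tree $[-p, q]$ with start vertex $0$ and end vertex $k$. A direct induction on word length should establish the closed form
$$\phi(w) = \bigl((p,\, p+k),\,(q,\, q-k)\bigr),$$
from which the triple $(p,q,k)$, and hence $w$, is recovered from $\phi(w) = ((m_1, n_1), (m_2, n_2))$ as $p = m_1$, $q = m_2$, $k = n_1 - m_1$.

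The main technical obstacle will be verification of this closed-form formula for $\phi$. Because the bicyclic multiplication is piecewise defined (splitting on whether $n_1 \geq m_2$ or $n_1 < m_2$), the induction step must separately show that right-multiplication by $a$ (which shifts the end vertex from $k$ to $k+1$, possibly enlarging $q$) and by $a\inv$ (the dual) transform $(p,q,k)$ in a way compatible with the corresponding action on $((p, p+k), (q, q-k))$ in $B \times B$; this case analysis mirrors precisely the folding procedure defining Munn trees. Conceptually the embedding is quite natural: the two coordinates of $\phi(w)$ record the Munn tree of $w$ traversed from its left and right endpoints respectively, and the identity $m_1 + m_2 = n_1 + n_2$ that cuts out the image of $\phi$ simply expresses the fact that both readings yield the same net displacement $k$.
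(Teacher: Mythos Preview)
Your argument is correct and follows the standard route to this classical result. Note, however, that the paper does not supply its own proof of Theorem~\ref{T:FISa=B}: the statement is simply quoted from Petrich's monograph \cite{Pet84} as background. There is therefore nothing in the paper to compare your argument against line by line. That said, the key ingredient of your converse inclusion --- the embedding of $FIS_a$ into $B\times B$ --- is exactly the fact the authors invoke later in Section~\ref{sec:classes} (just after Lemma~\ref{L:finite-products}) when they remark that ``it is well-known that $FIS_a$ embeds in $B\times B$'', so your approach is fully in line with what the paper takes for granted.
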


and

\begin{cor}[\cite{PetRei84}, Corollary~5.5] \label{C:SI-join}
 $\SI=\mathbf{ONBG}\vee \gen{B_2}$.
\end{cor}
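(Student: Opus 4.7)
The plan is to prove the two inclusions separately; $\mathbf{ONBG} \vee \gen{B_2} \subseteq \SI$ is routine, while $\SI \subseteq \mathbf{ONBG} \vee \gen{B_2}$ requires structural input. Since $\SI$ is a variety, for the first inclusion it suffices to check $\mathbf{ONBG} \subseteq \SI$ and $B_2 \in \SI$. On any completely regular semigroup $xx' = x'x = x^0$, so the first two defining identities of $\SI$ are immediate; on a semigroup whose idempotents form a normal band, the remaining two identities reduce after routine manipulation to the normality law $axya = ayxa$ together with the band-of-groups decomposition of a cryptogroup. For $B_2$, the identity $(yz)' = z'y'$ and the commutativity of idempotents in an inverse semigroup take care of three of the four defining identities at once, and the remaining identity $(xyx')(xyx')' = (xyx')'(xyx')$ is verified by a direct finite case analysis on the five elements (every element of the form $xyx'$ there turns out to be an idempotent or zero).

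For the substantive direction, take $S \in \SI$. By Theorem~\ref{T:var-characterization}, it is enough to exhibit $S$ as an epimorphic image of a subalgebra of a direct product whose factors lie in $\mathbf{ONBG}$ or in $\gen{B_2}$. The natural strategy is to construct two congruences $\sigma$ and $\tau$ on $S$ with $\sigma \cap \tau = 1_S$ such that $S/\sigma \in \mathbf{ONBG}$ and $S/\tau \in \gen{B_2}$; this realises $S$ as a subdirect product and places it inside the join. The congruence $\sigma$ should identify pairs of elements distinguished only by the Brandt-like behaviour of $S$'s $\gD$-structure, while $\tau$ should collapse the group content and retain only the underlying $B_2$-pattern. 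The four identities defining $\SI$ are tailored precisely for this: the second forces the principal factors of $S$ to be either $B_2$-like or completely simple, the first pushes the completely simple pieces to be cryptic (so that $S/\sigma$ becomes a cryptogroup), and the remaining two enforce orthodoxy and normality of the resulting band of idempotents.

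The main obstacle is the explicit construction of $\sigma$ and $\tau$, or equivalently the classification of subdirectly irreducible members of $\SI$, from which the corollary follows by Birkhoff's subdirect representation theorem. The crucial and most delicate point is that no Brandt semigroup over a nontrivial group can appear as a principal factor of a strict $I$-semigroup: any nontrivial group content must be absorbed into the $\mathbf{ONBG}$ factor, and this is exactly what the ``strict'' identities beyond those of $\I$ are engineered to enforce. Verifying this absorption, and then checking that the resulting $\sigma$ and $\tau$ genuinely have trivial intersection, is where the real work lies; once that is in place, the corollary is immediate from Theorem~\ref{T:var-characterization}.
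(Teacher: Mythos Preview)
This corollary is quoted from \cite{PetRei84} without proof, so there is no argument in the present paper to compare against. That said, your proposed strategy for $\SI\subseteq\mathbf{ONBG}\vee\gen{B_2}$ has a genuine gap. You aim to exhibit each $S\in\SI$ as a subdirect product via congruences $\sigma,\tau$ with $\sigma\cap\tau=1_S$, $S/\sigma\in\mathbf{ONBG}$ and $S/\tau\in\gen{B_2}$ --- equivalently, to show that every subdirectly irreducible member of $\SI$ lies in $\mathbf{ONBG}\cup\gen{B_2}$. This is impossible. Take $S=B(G,2)$, the $2\times 2$ Brandt semigroup over a nontrivial simple group $G$. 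Then $S\in\SI$: in any Brandt semigroup $xyx'$ is either $0$ or lies in a group $\gH$-class, so the second defining identity of $\SI$ holds, and the other three hold in every inverse semigroup. The congruence lattice of $S$ is the three-element chain $1_S\subset\rho_G\subset S\times S$ (where $\rho_G$ collapses the group coordinate), so $S$ is subdirectly irreducible. Yet $S\notin\mathbf{ONBG}$ (it is not completely regular) and $S\notin\gen{B_2}$ (since $\gen{B_2}\subseteq[x^2=x^3]$ forces trivial subgroups). In particular your assertion that no Brandt semigroup over a nontrivial group can appear as a principal factor of a strict $I$-semigroup is simply false: $B(G,2)$ is itself such a semigroup.

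The point is that $\mathbf{ONBG}\vee\gen{B_2}$ is not a subdirect join; one genuinely needs the full HSP description of Theorem~\ref{T:var-characterization}, not merely subdirect products. For instance $B(G,2)$ enters the join only as a \emph{homomorphic image} of $G\times B_2$, via $(g,(i,e,j))\mapsto(i,g,j)$ and $(g,0)\mapsto 0$. The argument in \cite{PetRei84} accordingly goes through the structure theory of strict regular semigroups rather than through a separation of congruences on $S$.
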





For undefined terms and further details, the reader is referred to
\cite{Pet84}, \cite{PetRei99}, and \cite{How95}.

\section{The classes $\hfin$, $\lfin$, $\rfin$, $\dfin$, and $\jfin$} \label{sec:classes}

Given a semigroup $S$ and $\K\in \{\gH,\gL,\gR,\gD,\gJ\}$,
we say that $S$ is \emph{$\K$-finite} if $S$ has only finitely many
(distinct) $\K$-classes.
Each of these classes contains a few well known families of semigroups.
For example, every group is $\gH$-finite,
every left (respectively, right) simple semigroup is $\gL$-finite
(respectively, $\gR$-finite),
every bisimple semigroup is $\gD$-finite,
and every simple semigroup is $\gJ$-finite.

Let $\kfin$ denote the class consisting of all $\K$-finite semigroups,
for each $\K\in \{\gH,\gL,\gR,\gD,\gJ\}$.

It is straightforward to see that $\{\hfin,\lfin,\rfin,\dfin,\jfin\}$
is the following poset with respect to class inclusion:
$$\begin{array}{ccccc}
   		&		& \jfin	&		& \\
   		&		& |	&		& \\
   		&		& \dfin	&		& \\
   		& \diagup	&	& \diagdown	& \\
   \lfin		&		&	&		& \rfin \\
		& \diagdown	&	& \diagup	& \\
		&		& \hfin	&		&
  \end{array}
$$

\begin{rmk} \label{R:class-poset}
 Notice that indeed $\hfin = \lfin \cap \rfin$.
\end{rmk}

These inclusions are all strict, as shown by the following examples.

\begin{ex} \label{E:bicyclic}
 The bicyclic monoid, $B$, is a bisimple semigroup;
 for that reason, it is both $\gD$- and $\gJ$-finite.
 However, $B$ is neither $\gL$- nor $\gR$-finite (nor, either way, $\gH$-finite).
\end{ex}

Since $(m,n)\gL (p,q)$ if and only if $n=q$ and $(m,n)\gR (p,q)$ if and only if $m=p$,
we have $|B/\gL|=\aleph_0$ and $|B/\gR|=\aleph_0$,
so that $B\notin \lfin$ and $B\notin \rfin$.

\begin{ex} \label{E:a^nb}
 Consider the inverse monoid defined by the inverse monoid presentation
 \[
  M =  {\rm InvM} \gen{a,b \mid b^2=b, \, b=baba\inv, \, aa\inv=1} \, .
 \]
 Then $M$ is $\gJ$-finite, while not $\gD$-finite.
\end{ex}

In view of the defining relations of $M$,
there are only two kinds of Sch\"{u}tzenberger graphs for the elements of $M$,
namely
\[
 \parbox{12cm}{
  \begin{pspicture}(0,-1)(12,2)
   \psdot(0,.4)
   \psdot(1,.4)
   \psdot(2,.4)
   \psdot(3,.4)
   \psline[linewidth=.4pt,arrows=->](0,.4)(.95,.4)
   \psline[linewidth=.4pt,arrows=->](1,.4)(1.95,.4)
   \psline[linewidth=.4pt,arrows=->](2,.4)(2.95,.4)
   \uput[d](.5,.4){\small $a$}
   \uput[d](1.5,.4){\small $a$}
   \uput[d](2.5,.4){\small $a$}
   \uput[r](3,.375){\small $\cdots$}
   \uput[u](1.5,1){\footnotesize $S\Gamma(1)$~:}
   \uput[r](4,.5){\small and}
   \psdot(5.4,.4)
   \psdot(6.4,.4)
   \psdot(7.4,.4)
   \psdot(8.4,.4)
   \psdot(9.4,.4)
   \psdot(10.4,.4)
   \psdot(11.4,.4)
   \psline[linewidth=.4pt,arrows=->](5.4,.4)(6.35,.4)
   \uput[r](6.5,.375){\small $\cdots$}
   \psline[linewidth=.4pt,arrows=->](7.4,.4)(8.35,.4)
   \uput[d](5.9,.4){\small $a$}
   \uput[d](7.9,.4){\small $a$}
   \uput[d](8.9,.4){\small $a$}
   \uput[d](9.9,.4){\small $a$}
   \uput[d](10.9,.4){\small $a$}
   \psline[linewidth=.4pt,arrows=->](8.4,.4)(9.35,.4)
   \psline[linewidth=.4pt,arrows=->](9.4,.4)(10.35,.4)
   \psline[linewidth=.4pt,arrows=->](10.4,.4)(11.35,.4)
   \uput[r](11.4,.375){\small $\cdots$}
   \pscurve[linewidth=.4pt,arrows=->](8.4,.4)(8.3,.8)(8.4,1)(8.5,.8)(8.45,.45)
   \pscurve[linewidth=.4pt,arrows=->](9.4,.4)(9.3,.8)(9.4,1)(9.5,.8)(9.45,.45)
   \pscurve[linewidth=.4pt,arrows=->](10.4,.4)(10.3,.8)(10.4,1)(10.5,.8)(10.45,.45)
   \uput[r](8.4,.7){\small $b$}
   \uput[r](9.4,.7){\small $b$}
   \uput[r](10.4,.7){\small $b$}
   \parabola[linewidth=.4pt](5.4,0)(6.9,-.2)
   \uput[d](6.9,-.2){\footnotesize $n\ge 0$}
   \uput[u](8.7,1){\footnotesize $S\Gamma(a^nb), \, n\ge 0$~:}
 \end{pspicture}}
\]
according to whether there is no occurence of $b$ or there is at least one occurence of $b$,
respectively.
Since there is no map between $S\Gamma(a^nb)$ and $S\Gamma(a^pb)$, with $n\neq p$,
that preserves labeling, incidence, and orientation and is bijective on the vertices
and surjective on the edges, we conclude that, in this case,
$a^nb$ and $a^pb$ are not $\gD$-related (cf.~\cite[Theorem~3.4]{JBS90}).
Thus, $M$ has infinitely many $\gD$-classes.
However, $a^nb \gJ a^{n+1}b$ for any non-negative integer $n$,
as $a^{n+1}b=a\,a^nb \in Ma^nb$ and
\[
 a^nb = a^nb\inv = a^n(baba\inv)\inv = a^n a b\inv a\inv b\inv
 = a^{n+1} b \,a\inv b \in a^{n+1}bM \, ,
\]
so that $a^nb \gJ a^pb$ for any non-negative integers $n$ and $p$.
Therefore, $M\in \jfin\smallsetminus \dfin$. 

\medskip

As mentioned earlier, a left zero semigroup is trivially $\gL$-finite,
but it needs not be $\gR$-finite, evidently.
Thus, $\lfin\smallsetminus \rfin$, and hence $\lfin\smallsetminus \hfin$,
are nonempty,
and likewise so are $\rfin\smallsetminus \lfin$ and $\rfin\smallsetminus \hfin$.
Next, we provide another example of a semigroup in $\lfin\smallsetminus \rfin$,
which will be of use later on.

\begin{ex} \label{E:Pedro1}
 Let $\Z$ be endowed with the binary operation defined by
 $$m \circ n = \left\{
  \begin{array}{ll}
   m+n&\mbox{ if $m$ is even}\\
   m&\mbox{ if $m$ is odd.}
  \end{array}
 \right.$$
 Then $P = (\Z, \circ)$ has finitely many $\gL$-classes but
 infinitely many $\gR$-classes.
\end{ex}

Indeed, for all $m,n,k \in \Z$, we have
$$\begin{array}{l}
(2m+1)\circ (n \circ k) = 2m+1 = ((2m+1)\circ n) \circ k,\\
(2m)\circ ((2n+1) \circ k) = 2m+2n+1 = ((2m)\circ (2n+1)) \circ k,\\
(2m)\circ ((2n) \circ k) = 2m+2n+k = ((2m)\circ (2n)) \circ k,
\end{array}$$
hence $P$ is a semigroup.
It is immediate that $2\Z$ and $\{2n+1\colon n\in \Z\}$ are the $\gL$-classes of $P$
and that $2\Z$ and $\{ 2n+1 \}$ $(n \in \Z)$ are its $\gH$- (and $\gR$-)classes,
which yields the desired conclusion.


\medskip

Of course, it may also happen that a semigroup fails to be $\gJ$-finite,
and thus belongs to neither of the other four classes:

\begin{ex} \label{E:monogenic}
 Both $(\N,+)$, the free monogenic semigroup, and $FIS_a$,
 the free monogenic inverse semigroup,
 have infinitely many $\gJ$-classes.
\end{ex}

\medskip

Next, we describe the behaviour of these properties with respect to the taking
of subsemigroups, homomorphic images, and direct products.

\begin{lma} \label{L:subsmgs}
 Let $\K\in \{\gH,\gL,\gR\}$, let $S\in \kfin$,
 and let $T\leq S$ be a regular subsemigroup of $S$.
 Then $T\in \kfin$.
\end{lma}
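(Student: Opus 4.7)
The plan is to reduce the claim to a direct counting argument by invoking Proposition~\ref{P:reg-subsmg}. Since $T$ is a regular subsemigroup of $S$ and $\K \in \{\gH, \gL, \gR\}$, that proposition gives
\[
 \K^T = \K^S \cap (T \times T).
\]
So the $\K^T$-classes of $T$ are exactly the restrictions to $T$ of the $\K^S$-classes of $S$: for any $\K^T$-class $C$ of $T$, there is a unique $\K^S$-class $K$ of $S$ with $C = K \cap T$, and conversely every nonempty intersection $K \cap T$ is a $\K^T$-class of $T$.

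From this, I would simply note that the map sending a $\K^T$-class $C$ of $T$ to the $\K^S$-class of $S$ containing it is injective, so $|T/\K^T| \leq |S/\K^S| < \infty$, giving $T \in \kfin$.

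There is no real obstacle here; the statement is essentially an immediate corollary of Proposition~\ref{P:reg-subsmg}. The only subtlety worth flagging is that the proposition genuinely requires the regularity hypothesis on $T$: without it, the inclusion $\K^T \subseteq \K^S \cap (T \times T)$ may be strict, and a single $\K^S$-class of $S$ could meet $T$ in infinitely many $\K^T$-classes, so the counting argument would fail. Note also that the same reasoning does not extend to $\K \in \{\gD, \gJ\}$, which is why these are excluded from the statement.
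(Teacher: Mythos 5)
Your proof is correct and is essentially the paper's own argument: both reduce the claim to Proposition~\ref{P:reg-subsmg}, the paper phrasing the count in contrapositive form (infinitely many $\K$-classes in $T$ would force infinitely many in $S$) while you phrase it directly via the injective map from $\K^T$-classes to $\K^S$-classes. Nothing further is needed.
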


\begin{proof}
 The result is straightforward by Proposition~\ref{P:reg-subsmg},
 since $K_a^T=K_a^S \cap T$ yields that
 infinitely many $\K$-classes in $T$ implies infinitely many $\K$-classes in $S$.
\end{proof}

In this statement, not only the regularity of the subsemigroup
cannot be dropped, as also it cannot be extended to $\gD$ and $\gJ$,
as the following examples show:

\begin{ex}
 If $S$ is the infinite cyclic group generated by $a$
 and $T$ is its infinite monogenic subsemigroup
 (thus consisting only of the positive powers of $a$),
 then $\gL^S=S\times S$ while $\gL^T$ is the identity relation on $T$.
 Therefore, $S$ is $\gL$-finite whereas $T$ is not
 (and likewise for $\gR$ and $\gH$).
\end{ex}

\begin{ex} \label{E:CliPre-bisimple}
 By \cite[Theorem~8.55]{CliPre67}, any semigroup can be embedded in a
 (necessarily regular) bisimple semigroup (actually, monoid).
 Thus, $\gD$-finite semigroups can have non $\gD$-finite regular subsemigroups.
\end{ex}

Similarly,

\begin{ex} \label{E:CliPre-simple}
 Given that every semigroup can be embedded in a simple semigroup
 (cf.~\cite[Theorem~8.45]{CliPre67}),
 it follows that $\gJ$-finite semigroups can have  regular subsemigroups
 which are not $\gJ$-finite.
\end{ex}

Notice that, in the second of these last two examples,
the semigroup $S$ in which the original semigroup $T$ embeds into
is generated by $A\cup \{b,c\}$,
where $A$ is a generating set for $T$ and $b,c\notin T$,
and so $S$ is finitely generated whenever $T$ is.
Moreover, by \cite[Theorem~8.48]{CliPre67}
we have that $S$ is regular (respectively, inverse)
if and only if $T$ is regular (respectively, inverse).

\medskip

Another example is:

\begin{ex}
 Let $X=\Z\times \N_0$ and write, for all $r \in \Z$ and $s\geq 0$,
 $$V(r,s)=\{(x,y)\in X \colon s\leq y\leq x+s-r\} \, ,$$
 that is, the unbounded region depicted in Figure~\ref{F:Vsets}.
 \begin{figure}[!h]
  \[
   \parbox{4cm}{
    \begin{pspicture}(0,0)(4,4.5)
    \pspolygon[linestyle=none,fillstyle=solid,fillcolor=liggray](2,0)(2,2.5)(2.5,2.5)(2.5,2)(3,2)(3,1.5)(3.5,1.5)(3.5,1)(4,1)(4,0)(2,0)
    \psline[linewidth=.4pt](0,0)(0,4)
    \psline[linewidth=1.2pt](2,0)(2,2.5)(2.5,2.5)(2.5,2)(3,2)(3,1.5)(3.5,1.5)(3.5,1)(4,1)
    \uput[l](0,2.25){\small $r$}
    \uput[u](2.25,2.5){\small $s$}
    \psline[linewidth=.4pt,linestyle=dashed,dash=1pt 1pt](0,2.5)(2,2.5)
    \psline[linewidth=.4pt,linestyle=dashed,dash=1pt 1pt](0,2)(2,2)
   \end{pspicture}}
  \]
  \caption{The set $V(r,s)$}\label{F:Vsets}
 \end{figure}
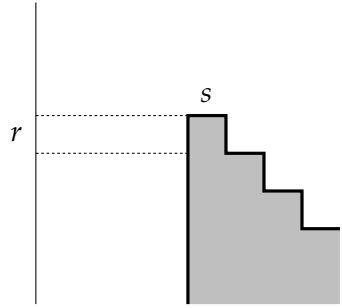
 Consider the bijections $\varphi\colon V(0,0)\to V(0,1)$,
 defined by $(x,y)\varphi=(x,y+1)$,
 and $\psi\colon X\to X$, defined by $(x,y)\psi=(x+1,y)$,
 and the inverse subsemigroups of the symmetric inverse semigroup $I_X$
 $$ T=\gen{\varphi} \; \text{ and } \; S=\gen{\varphi,\psi} \, .$$
 Then $S$ has two $\gJ$-classes,
 whereas its (regular) subsemigroup $T$ has infinitely many $\gJ$-classes.
\end{ex}

We begin by showing that the set
$\VV = \{ V(r,s) \colon r \in \mathbb{Z} \, , \; s \geq 0\}$
is closed under intersection.
So let $V(r,s)$ and $V(r',s')$ be sets in $\VV$,
for some $r,r' \in \mathbb{Z}$ and $s,s' \geq 0$,
and, without loss of generalization, assume that $s' \geq s$.
It is straightforward to check that
\begin{align*}
 V(r,s) \cap V(r',s')
    &= V(r,s) \cap (\mathbb{Z} \times [s',+\infty[) \cap V(r',s') \\
    &= V(r+s'-s,s') \cap V(r',s') \\
    &= V((r+s'-s)\vee r',s') \in \mathcal{V} \, .
\end{align*}

Next, we show that $T$ has infinitely many $\gJ$-classes;
indeed, we show that $T$ is isomorphic to the monogenic free inverse semigroup.
Let $\theta \colon FIS_a \to T$ be the epimorphism defined by $a\theta=\varphi$.
Since $FIS_a$ is combinatorial, checking that $\theta$ is injective
on $E=E(FIS_a)$ yields that $\theta$ is an isomorphism.
On the one hand, every $e\in E$ admits a unique representation in the form $a^{-r}a^{r+s}a^{-s}$,
with $r,s\geq 0$ not both $0$.
On the other hand, $\varphi^n\colon V(n-1,0)\to V(n-1,n)$, for all $n\geq 1$.
In fact,
\[
 \dom \varphi^2 = \big(\im \varphi \cap \dom \varphi\big)\varphi\inv
  = (V(0,1) \cap V(0,0))\varphi\inv
  = V(1,1)\varphi\inv
  = V(1,0)
\]
and
\[
 \im \varphi^2 = (\im \varphi \cap \dom \varphi)\varphi
  = V(1,1)\varphi
  = V(1,2)
\]
and the conclusion follows by induction.
Thus,
$$ \varphi^n\varphi^{-n} = \id|_{\dom \varphi^n} = \id|_{V(n-1,0)} \quad \text{ and } \quad
    \varphi^{-n}\varphi^n = \id|_{\im \varphi^n} = \id|_{V(n-1,n)} \, ,$$
for all $n\geq 1$, and so
\begin{align*}
 \varphi^{-r}\varphi^{r+s}\varphi^{-s}
 &= \id|_{V(r-1,r)} \; \id|_{V(s-1,0)} \\
 &= \id|_{V(r-1,r) \cap V(s-1,0)} \\
 &= \id|_{V(r+s-1,r)} \, ,
\end{align*}
for all $r,s\geq 1$ --- in fact, for all $r,s\geq 0$ not both $0$.
Therefore, the map $(r,s)\mapsto \dom (\varphi^{-r}\varphi^{r+s}\varphi^{-s})$
is injective, and so is $\theta|_E$.
Hence $T$ is a monogenic free inverse semigroup.
This ensures that $T$ has infinitely many $\gD$-classes,
and so infinitely many $\gJ$-classes as well.

In order to show that $S$ has two $\gJ$-classes,
we start by proving that
\begin{equation} \label{Eq:idps-S}
 E(S)=\{\id|_{V(r,s)} \colon r \in \Z,\; s\geq 0\} 
\cup \{\id|_{X} \} \, .
\end{equation}

Clearly, $\id|_X = \psi\psi\inv \in E(S)$ and,
if $r \in \mathbb{Z}$ and $s \geq 0$, we have
\begin{align*}
 \dom(\psi^{s-r-1}\varphi^{-s})
 &= (\im\psi^{s-r-1} \cap \dom\varphi^{-s})\psi^{-s+r+1} \\
 &= (V(s-1,s))\psi^{-s+r+1} \\
 &= V(r,s) \, ,
\end{align*}
hence $\psi^{s-r-1}\varphi^{-s}\varphi^{s}\psi^{-s+r+1} = \id|_{V(r,s)}$.
Thus, the right hand side of (\ref{Eq:idps-S}) is contained in $E(S)$.

Now we know by that $\VV$ is closed under intersection,
hence so is $\VV \cup \{ X \}$.
It follows that $\dom \alpha \in \VV \cup \{ X \}$ for every
$\alpha\in \{\varphi,\varphi\inv,\psi,\psi\inv\}$.
Since $\dom (\phi \sigma) = (\im \phi \cap \dom \sigma)\phi\inv$
whichever the injective mappings $\phi$ and $\sigma$
and since the preimage of any $V\in \VV \cup \{ X \}$ under either $\varphi$ or $\psi$
is still in $\VV \cup \{ X \}$,
we conclude that $\dom \alpha \in \VV \cup \{ X \}$ for every
$\alpha\in S=\gen{\varphi,\psi}$.
Therefore (\ref{Eq:idps-S}) holds.

Finally, for all $r\in \Z$ and $s\geq 0$, we can construct a bijection
$$V(r,s) \mapright{\psi^{s-r-1}} V(s-1,s) \mapright{\varphi^{-s}} V(s-1,0) \mapright{\psi^{1-s}} V(0,0),$$
hence all the idempotents in $\{\id|_{V(r,s)} \colon r \in \Z,\; s\geq 0\}$
belong to the same $\gJ$-class.
The only other idempotent, $\id|_X$, belongs clearly to a different $\gJ$-class,
so $S$ has two $\gJ$-classes as claimed.

\medskip

The next result is an easy consequence of the fact that (all) Green's relations are preserved
under homomorphism.
For that reason, this property holds, unlike the previous one, also for $\gD$ and $\gJ$.

\begin{lma} \label{L:hom-images}
 Let $\K\in \{\gH,\gL,\gR,\gD,\gJ\}$, let $S\in \kfin$, and let $\psi\colon S\to T$ be a morphism.
 Then $S\psi \in \kfin$.
\end{lma}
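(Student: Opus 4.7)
The plan is to show that the homomorphism $\psi$ induces a surjection from the set of $\K$-classes of $S$ onto the set of $\K$-classes of $S\psi$; since the former is finite by hypothesis, so is the latter.

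First I would verify that each of the five Green's relations is preserved under $\psi$, in the sense that $a\K b$ in $S$ implies $a\psi \K b\psi$ in $S\psi$. For $\gL$, if $a\gL b$ then there exist $x,y\in S^1$ with $xa=b$ and $yb=a$, so $(x\psi)(a\psi)=b\psi$ and $(y\psi)(b\psi)=a\psi$, which gives $(S\psi)^1(a\psi)=(S\psi)^1(b\psi)$, i.e.\ $a\psi\gL b\psi$ in $S\psi$. The argument for $\gR$ is symmetric, and the one for $\gJ$ analogous (using left and right factors simultaneously). Then $\gH=\gL\cap\gR$ and $\gD=\gL\circ\gR$ immediately inherit the property.

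Having this, I would define $\overline{\psi}\colon S/\K \to (S\psi)/\K$ by $K_a^S \mapsto K_{a\psi}^{S\psi}$. The preservation established above shows this is well defined (distinct $\K$-related representatives of a class in $S$ map to $\K$-related elements in $S\psi$), and it is obviously surjective since every element of $S\psi$ is of the form $a\psi$ for some $a\in S$. Therefore $|(S\psi)/\K|\leq |S/\K|<\infty$, so $S\psi\in\kfin$.

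There is no real obstacle here; the only point to be careful about is that the $S^1$ (resp.\ $T^1$) in the definition of $\gL,\gR,\gJ$ uses an externally adjoined identity, so when pushing $xa=b$ with $x=1$ through $\psi$ one simply reads it as $a=b$ in $S$, which still gives $a\psi = b\psi$ in $(S\psi)^1$. Everything else is a direct translation of defining equalities under a homomorphism.
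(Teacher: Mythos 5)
Your proof is correct and takes essentially the same route as the paper: both arguments rest solely on the fact that each Green's relation is preserved by morphisms, the paper phrasing it contrapositively (infinitely many classes in $S\psi$ would lift to infinitely many in $S$) while you phrase it directly via the induced surjection $S/\K \to (S\psi)/\K$. Your explicit verification of the preservation step, which the paper merely cites as well known, is a welcome addition but does not change the substance.
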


\begin{proof}
 Suppose there existed an infinite set $I$ such that
 $S\psi/\K^{S\psi}=\cup_{i\in I} K^{S\psi}_{a_j\psi}$,
 where $K^{S\psi}_{a_i\psi}\neq K^{S\psi}_{a_j\psi}$ whenever $i\neq j$.
 But then all the $\K^S$-classes $K^S_{a_i}$, with $i\in I$, would be distinct,
 for if $(a_i,a_j) \in \K^S$ for some $i,j\in I$ then also $(a_i\psi,a_j\psi) \in \K^{S\psi}$,
 a contradiction.
\end{proof}

\begin{lma} \label{L:finite-products}
 Let $\K\in \{\gH,\gL,\gR,\gD,\gJ\}$, let $I$ be a finite set,
 and let $\{S_i\colon i\in I\}\subset \kfin$,
 where each $S_i$ is either regular or a monoid.
 Then $\Pi_{i\in I} S_i \in \kfin$.
\end{lma}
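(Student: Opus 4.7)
The plan is to define the map
\[
 \varphi \colon S = \prod_{i \in I} S_i \to \prod_{i \in I} \bigl(S_i / \K^{S_i}\bigr), \quad (a_i)_{i \in I} \mapsto \bigl(K^{S_i}_{a_i}\bigr)_{i \in I},
\]
and to show that $\varphi(\bar{a}) = \varphi(\bar{b})$ forces $\bar{a} \, \K^S \, \bar{b}$; equivalently, that $\varphi$ factors through $S / \K^S$. Since each factor of the codomain is finite by hypothesis and $I$ is finite, the codomain is finite, and hence so is $S/\K^S$, as required.

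The core step is therefore the componentwise claim: if $a_i \, \K^{S_i} \, b_i$ for every $i \in I$, then $\bar{a} = (a_i)_{i \in I}$ and $\bar{b} = (b_i)_{i \in I}$ are $\K^S$-related. I would treat $\K = \gL$ first. Given $a_i \, \gL^{S_i} \, b_i$ for every $i$, I aim to produce $x_i, y_i \in S_i$ with $a_i = x_i b_i$ and $b_i = y_i a_i$; the tuples $\bar{x} = (x_i)_i$ and $\bar{y} = (y_i)_i$ then lie in $S$ and witness $\bar{a} \, \gL^S \, \bar{b}$ via $\bar{a} = \bar{x}\bar{b}$ and $\bar{b} = \bar{y}\bar{a}$. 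The subtlety is that $S_i^1 a_i = S_i^1 b_i$ yields such an $x_i$ inside $S_i$ directly only when $a_i \neq b_i$; when $a_i = b_i$ I must still exhibit some $x_i \in S_i$ with $x_i a_i = a_i$. Here the hypothesis enters: if $S_i$ is a monoid, take $x_i$ to be its identity; if $S_i$ is regular, pick any $t_i \in S_i$ with $a_i t_i a_i = a_i$ and set $x_i = a_i t_i$, so that $x_i a_i = a_i t_i a_i = a_i$. The case $\K = \gR$ is dual, and $\K = \gH$ is immediate from $\gH = \gL \cap \gR$. The case $\K = \gD$ follows by choosing, for each $i$, an intermediary $c_i \in S_i$ with $a_i \, \gL^{S_i} \, c_i \, \gR^{S_i} \, b_i$ and applying the $\gL$ and $\gR$ cases to obtain $\bar{a} \, \gL^S \, \bar{c} \, \gR^S \, \bar{b}$. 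Finally, $\K = \gJ$ is handled analogously to $\gL$, producing two-sided multipliers $x_i, y_i \in S_i$ with $a_i = x_i b_i y_i$ (for the case $a_i = b_i$, one again invokes the monoid identity or, in the regular case, $x_i = a_i t_i$ and $y_i = s_i a_i$ with $a_i s_i a_i = a_i t_i a_i = a_i$).

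The main, and essentially only, obstacle is the uniform handling of the superscript~$1$ appearing in the definitions of Green's relations: the direct product of non-monoids is itself not a monoid, so any factor-by-factor use of a phantom identity must be replaced by a genuine element of $S_i$ in order for the resulting tuple to lie in $S$. The hypothesis that each $S_i$ is regular or a monoid is exactly what allows this replacement to be carried out uniformly across the index set.
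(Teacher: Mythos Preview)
Your proof is correct and follows essentially the same route as the paper: both arguments reduce to the fact that componentwise $\K$-equivalence implies $\K$-equivalence in the product when each factor is regular or a monoid, and then finish with an elementary finiteness count. The paper states this key fact as ``easy to check'' and wraps up via a contradiction argument on an infinite family of classes, whereas you spell out the verification (in particular, the role of the regular/monoid hypothesis in replacing the phantom identity) and conclude directly via the map~$\varphi$; these are presentational rather than substantive differences.
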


\begin{proof}
 We prove the claim for a direct product $S\times T$,
 its generalization to the arbitrary case being straightforward.
 
 So let $S,T \in \kfin$.
 It is easy to check that, since both $S$ and $T$ are either regular or monoids,
 we have $(a,x)\K^{S\times T} (b,y)$ if and only if $a\K^S b$ and $x\K^T y$,
 for all $(a,x),(b,y) \in S\times T$.
 In order to obtain a contradiction,
 suppose $K_{(a_i,x_i)}^{S\times T}$, with $i\in I$,
 are infinitely many distinct $\K$-classes of $S\times T$.
 Consider the relations $\sim_S$ and $\sim_T$ defined, for all $i,j\in I$, by
 $$i\sim_S j \iff K^S_{a_i}=K^S_{a_j} \quad \text{ and } \quad
    i\sim_T j \iff K^T_{x_i}=K^T_{x_j} \, ,$$
 which are easily seen to be equivalences.
 Since both $S,T \in \kfin$,
 we have that $|I/\sim_S|<\infty$ and $|I/\sim_T|<\infty$.
 Thus, as $I$ is infinite, there exist distinct $i,j\in I$ such that
 $i\sim_S j$ and $i\sim_T j$,
 in which case $K^{S\times T}_{(a_i,x_i)}=K^{S\times T}_{(a_j,x_j)}$,
 a contradiction.
\end{proof}

As one might expect, the previous result no longer holds for infinite direct products.
Next, we provide one such example.

\begin{ex}
 For each $p\in \N$, let $\N_p=Mon\gen{a \mid a^p=a^{p+1}}$ be the monogenic monoid
 of index $p$ and period $1$.
 Since each monoid $\N_p$ is finite, we have that they are all $\gJ$-finite.
 The infinite direct product $\Pi_{p\in \N} \N_p$, however, is not.
\end{ex}

Let $S=\Pi_{p\in \N} \N_p$, fix $k \in \N$,
and take
$$x=(a,a^2,\ldots,a^k,a^k,a^k,\ldots) \, , \;
y=(a,a^2,\ldots,a^k,a^{k+1},a^{k+1},\ldots) \in S \, .$$
Since there are no $u,v \in \N_{k+1}$ such that $a^k=ua^{k+1}v$,
we have that $(a^k,a^{k+1})\notin \gJ^{\N_{k+1}}$.
Therefore, $(x,y)\notin \gJ^S$.
It follows that $S$ has infinitely many $\gJ$-classes.

\medskip

Also, the semigroups in the direct product do have to be either regular or monoids:
\begin{ex}
 Let $S=\{a_n \colon n \in \N\}$, where $a_i\neq a_j$ for any $i$ and $j$,
 be a right zero semigroup
 and $T=\{0,b\}$, where $b\neq 0$, be a null semigroup.
 Then both $S$ and $T$ are $\gR$-finite, whereas $S\times T$ is not.
\end{ex}

That $S$ and $T$ are $\gR$-finite is obvious.
However, $(a_m,b) \gR (a_n,b)$ if and only if $m=n$,
as, no matter if $z=0$ ou $z=b$,
we have $(a_m,b)=(a_n,b)(a_p,z)=(a_p,0)$ implying $b=0$,
which is a contradiction.
Therefore, $S\times T$ has infinitely many $\gR$-classes.

\medskip

Together with Examples~\ref{E:bicyclic} and \ref{E:monogenic},
Lemma~\ref{L:finite-products} provides yet another example that what holds for
$\gH$, $\gL$ and $\gR$ needs not hold for $\gD$ and $\gJ$,
namely the not passing of $\gD$-finiteness to regular subsemigroups
(cf.~Lemma~\ref{L:subsmgs}).
In fact, it is well-known that $FIS_a$ embeds in $B\times B$
and, while $B\times B$ is $\gD$-finite by
Example~\ref{E:bicyclic} and Lemma~\ref{L:finite-products},
its regular subsemigroup $FIS_a$ is not even $\gJ$-finite.

\section{Locally Green-finite varieties} \label{sec:vars}

Recall that a variety $\V$ is said to be \emph{locally finite} if all its finitely generated
members are finite.
These include, for example, the \emph{finitely generated} varieties,
that is, varieties generated by a finite family of finite algebras
(cf.~\cite{Mal70} or \cite{Bez01}).

\begin{defn}
 Given a variety $\V$ and $\K\in \{\gH,\gL,\gR,\gD,\gJ\}$,
 we say that $\V$ is \emph{locally $\K$-finite} if each finitely generated
 semigroup belonging to $\V$ is $\K$-finite.
\end{defn}

\begin{rmk} \label{R:finiteness-hierarchy}
 \begin{enumerate}
  \item Clearly, any locally $\gH$-finite variety is locally $\K$-finite, \linebreak
   whichever the Green's relation $\K$.
   Similarly, every locally $\gL$- or $\gR$-finite variety is locally $\gD$-finite,
   and every locally $\gD$-finite variety is locally $\gJ$-finite.
  \item It is also straightforward that every locally finite variety
   is locally $\gH$-finite (and, thus, locally $\gL$-, $\gR$-, $\gD$-,
   and $\gJ$-finite as well).
   Such is the case for the variety of bands (cf.~\cite[Theorem~4.5.3]{How95})
   and, as noted above, for every finitely generated variety.
 \end{enumerate}
\end{rmk}

As a matter of fact, for varieties of semigroups,
of completely regular semigroups, and of inverse semigroups,
being locally $\gD$-finite is equivalent to being
locally $\gJ$-finite:

\begin{theorem} \label{T:locDfin=locJfin}
 Let $\V$ be a variety of semigroups, a variety of completely regular semigroups
 or a variety of inverse semigroups.
 Then $\V$ is locally $\gJ$-finite if and only if it is locally $\gD$-finite.
\end{theorem}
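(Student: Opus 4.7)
The direction ``locally $\gD$-finite $\Rightarrow$ locally $\gJ$-finite'' is immediate from $\gD \subseteq \gJ$: every $\gJ$-class is a union of $\gD$-classes, so $|S/\gJ| \leq |S/\gD|$ for every semigroup $S$, and $\gD$-finiteness entails $\gJ$-finiteness. The plan for the converse is to assume $\V$ is locally $\gJ$-finite, fix a finitely generated $S \in \V$, and show that $S$ is $\gD$-finite, proceeding by cases on the type of $\V$.

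If $\V$ is a variety of completely regular semigroups, the conclusion is immediate from Theorem~\ref{T:D=JinCR}, which provides $\gD = \gJ$ on $S$. If $\V$ is a variety of (plain) semigroups, I would argue as follows: for each $a \in S$, the monogenic subsemigroup $\gen{a}$ is a subalgebra of $S$ and so lies in $\V$; being finitely generated, it is $\gJ$-finite by hypothesis. But a monogenic semigroup is $\gJ$-finite only if it is finite, since the free monogenic semigroup has trivial $\gJ$-relation (cf.\ Example~\ref{E:monogenic}). Hence every element of $S$ has finite order, so $S$ is periodic and, in particular, an epigroup; Proposition~\ref{P:D=J-epigroup} then yields $\gD = \gJ$ on $S$, upgrading the $\gJ$-finiteness of $S$ to $\gD$-finiteness.

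The inverse-semigroup case is the main obstacle, since a variety of inverse semigroups closes under $(2,1)$-subalgebras but not under arbitrary subsemigroups, and so the argument above cannot be applied to $\gen{a}$ directly. My plan would be to work with the monogenic inverse subsemigroup $T = \gen{a,a\inv}$, which lies in $\V$ and is hence $\gJ$-finite, and first observe that $\V$ cannot contain the bicyclic monoid $B$: otherwise Theorem~\ref{T:FISa=B} would force $FIS_a \in \V$, contradicting local $\gJ$-finiteness in view of Example~\ref{E:monogenic}. Invoking the classical structure theory of monogenic inverse semigroups (Preston--Munn), the remaining shapes for $T$ are finite semigroups or cyclic groups; in either case some power of $a$ lies in a subgroup of $S$. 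Since $a$ was arbitrary, this makes $S$ an epigroup, and Proposition~\ref{P:D=J-epigroup} once more yields $\gD = \gJ$ on $S$. The technical heart of the proof is the classification step needed in the inverse case to constrain the possible shapes of $T$ once $B$ has been excluded.
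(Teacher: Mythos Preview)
Your proposal is correct and follows essentially the same three-case argument as the paper: immediate for completely regular semigroups via $\gD=\gJ$, periodicity of monogenic subsemigroups for plain semigroups, and the classification of monogenic inverse semigroups (together with the exclusion of $B$ and $FIS_a$ from $\V$) to force the epigroup property in the inverse case. The only cosmetic discrepancy is that the paper cites Brough's four-case dichotomy \cite{Bropp} rather than Preston--Munn, and in the surviving non-finite case the monogenic inverse semigroup $T$ need only \emph{contain} an infinite cyclic subgroup rather than be one itself; your conclusion that some power of $a$ lies in a subgroup is nonetheless exactly what is needed.
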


\begin{proof}
 In view of the comments above, only the direct implication needs proven.
 So suppose $\V$ is locally $\gJ$-finite.
 
 In case $\V$ is a variety of completely regular semigroups,
 the result follows from Theorem~\ref{T:D=JinCR}.
 
 Now suppose $\V$ is a locally $\gJ$-finite variety of semigroups
 and let $S$ be a finitely generated semigroup in $\V$.
 We claim that $S$ is periodic.
 In order to obtain a contradiction, suppose $a\in S$ has infinite order.
 Then $\gen{a}$ is a subsemigroup of $S$, and so $\gen{a}\in \V$.
 In addition, $\gen{a}$ is finitely generated and has infinitely many $\gJ$-classes,
 as $\gen{a} \approx (\N,+)$.
 But this is a contradiction, since $\V$ is locally $\gJ$-finite by assumption.
 Thus $S$ is indeed periodic and so $\gD=\gJ$ in $S$ by Proposition~\ref{P:D=J-epigroup}.
 As $S$ is finitely generated, then $S$ is $\gJ$-finite by assumption.
 Therefore $S$ is $\gD$-finite as well and the conclusion follows.
 
 Finally, let $\V$ be a variety of inverse semigroups
 and let $S$ be a finitely generated semigroup from $\V$.
 Again, let $a\in S$ and consider the monogenic inverse semigroup $\gen{a}$.
 By \cite[Corollary~4]{Bropp}, there are only four possibilities:
 (i) $a$ has finite order;
 (ii) $\gen{a}$ has an infinite cyclic subgroup;
 (iii) $\gen{a}$ has a subsemigroup isomorphic to the bicyclic monoid;
 (iv) $\gen{a}$ is the monogenic free inverse semigroup.
 Cases (iii) and (iv) can be readily excluded,
 since they both imply that $\V$ would contain a
 non locally $\gJ$-finite variety,
 namely the variety generated by the bicyclic monoid,
 equivalently, by the monogenic free inverse semigroup (cf.~Theorem~\ref{T:FISa=B}).
 Therefore, one of (i) or (ii) must hold,
 and, in either case, $S$ is an epigroup (possibly, even a periodic semigroup).
 Again by Proposition~\ref{P:D=J-epigroup}, $\gD=\gJ$ in $S$ 
 and so, since by assumption $S$ is $\gJ$-finite,
 we may conclude that it is $\gD$-finite as well.
\end{proof}


\medskip

For a nontrivial variety $\V$ and a positive integer $n$,
denote by $F_n(\V)$ the $n$-generated free object on $\V$.
A simple consequence of Lemma~\ref{L:hom-images} is:

\begin{prop} \label{P:VveeW}
 Let $\K\in \{\gH,\gL,\gR,\gD,\gJ\}$.
 A variety $\V$ of semigroups or of $I$-semigroups is locally $\K$-finite
 if and only if each $F_n(\V)$ is $\K$-finite.
\end{prop}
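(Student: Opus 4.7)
The plan is to split the equivalence into its two implications, with the forward direction being essentially tautological and the converse relying on the universal property of the free objects together with Lemma~\ref{L:hom-images}.

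First I would handle the direct implication. Assume $\V$ is locally $\K$-finite. For any positive integer $n$, the free object $F_n(\V)$ is, by construction, an $n$-generated member of $\V$, hence finitely generated. By the definition of local $\K$-finiteness, $F_n(\V) \in \kfin$, which is exactly what we need.

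For the converse, assume each $F_n(\V)$ is $\K$-finite and let $S \in \V$ be finitely generated, say by a set of cardinality $n$. By the universal property of the free object $F_n(\V)$ in the variety $\V$, the inclusion of the $n$-element generating set of $S$ into $S$ extends to a (surjective) homomorphism $\psi \colon F_n(\V) \to S$. Since $F_n(\V) \in \kfin$ by hypothesis, Lemma~\ref{L:hom-images} yields $S = F_n(\V)\psi \in \kfin$. Hence every finitely generated member of $\V$ is $\K$-finite, i.e.\ $\V$ is locally $\K$-finite.

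There is essentially no obstacle here: the statement is a standard reduction of a ``local'' property to the behaviour of the relatively free objects, and it works uniformly for all five choices of $\K$ because Lemma~\ref{L:hom-images} covers each Green relation. The only point one should perhaps emphasise explicitly is that the argument is valid both in the signature of semigroups and in the signature of $I$-semigroups, since in both cases relatively free objects on finite sets exist for any non-trivial variety, and the corresponding universal property produces the epimorphism $\psi$ needed above.
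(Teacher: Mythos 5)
Your proof is correct and follows exactly the same route as the paper: the forward direction is immediate since $F_n(\V)$ is a finitely generated member of $\V$, and the converse combines the universal property of the relatively free objects with Lemma~\ref{L:hom-images}. Nothing is missing.
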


\begin{proof}
 The direct implication is an immediate consequence of the definition.
 As for the converse, it follows straightforwardly
 from the universal property of free objects ---
 every finitely generated semigroup in $\V$ is a homomorphic image of
 some $F_n(\V)$ --- and from Lemma~\ref{L:hom-images}.
\end{proof}


\medskip

\begin{prop} \label{P:VveeW}
 Let $\V$ and $\W$ be varieties of $I$-semigroups.
 If $\V$ and $\W$ are locally $\K$-finite, with $\K\in \{\gH,\gL,\gR\}$,
 then so is $\V \vee \W$.
\end{prop}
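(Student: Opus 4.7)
The plan is to reduce to finitely generated free $I$-semigroups via the previous proposition and then compare $F_n(\V\vee\W)$ with the direct product $F_n(\V)\times F_n(\W)$. By the earlier Proposition (the one characterising locally $\K$-finite via the $F_n$'s), it suffices to prove that $F_n(\V\vee\W)$ is $\K$-finite for every $n\geq 1$.

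The first step is to construct an $I$-embedding
$$\iota\colon F_n(\V\vee\W)\hookrightarrow F_n(\V)\times F_n(\W).$$
Since $F_n(\V),F_n(\W)\in \V\vee\W$, the universal property of $F_n(\V\vee\W)$ yields $I$-morphisms $\pi_{\V}\colon F_n(\V\vee\W)\to F_n(\V)$ and $\pi_{\W}\colon F_n(\V\vee\W)\to F_n(\W)$ extending the identity on a fixed set of free generators; combining them gives $\iota$. Injectivity is the only point to check: if $\iota(u)=\iota(v)$, then the identity $u=v$ holds in both $\V$ and $\W$, hence in $\V\vee\W$ (this uses the description of $\V\vee\W$ via subalgebras of products recalled in Section~\ref{sec:bg}, since any identity satisfied in every generator is satisfied in every subalgebra of every direct product of generators), so $u=v$ in $F_n(\V\vee\W)$.

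Once $\iota$ is in place, the argument closes quickly. By hypothesis $F_n(\V)$ and $F_n(\W)$ lie in $\kfin$. Both are $I$-semigroups, hence regular (as $xx'x=x$ is one of the defining identities~(\ref{Eq:varIsmgs})), so Lemma~\ref{L:finite-products} applies and yields $F_n(\V)\times F_n(\W)\in\kfin$. Finally, the image $\iota(F_n(\V\vee\W))$ is closed under the unary operation (because $\iota$ is an $I$-morphism) and therefore is a \emph{regular} subsemigroup of $F_n(\V)\times F_n(\W)$, to which Lemma~\ref{L:subsmgs} applies; this forces $F_n(\V\vee\W)\in\kfin$, completing the proof.

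The main obstacle is really the verification that $\iota$ is injective, i.e. the identification of the join-free object with a subalgebra of the product of the component free objects; the rest is bookkeeping. Note that this strategy works only for $\K\in\{\gH,\gL,\gR\}$ because Lemma~\ref{L:subsmgs} fails for $\gD$ and $\gJ$ (cf.\ Examples~\ref{E:CliPre-bisimple} and \ref{E:CliPre-simple}), so the corresponding question for $\gD$ and $\gJ$ is left untouched by this argument.
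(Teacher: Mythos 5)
Your proof is correct. The engine is the same as the paper's --- everything ultimately rests on Lemma~\ref{L:finite-products} (a product of two regular $\K$-finite semigroups is $\K$-finite) followed by Lemma~\ref{L:subsmgs} (regular subsemigroups inherit $\K$-finiteness for $\K\in\{\gH,\gL,\gR\}$) --- but you set up the reduction differently. The paper takes an arbitrary finitely generated $S\in\V\vee\W$, invokes the Birkhoff-style description of the join to get an epimorphism $\varphi\colon T\twoheadrightarrow S$ with $T\leq V\times W$, pulls the generators back to $T$, cuts $V\times W$ down to the finitely generated $V'\times W'$, and finishes with Lemma~\ref{L:hom-images}. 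You instead reduce to the relatively free objects via the $F_n$-criterion and use the canonical subdirect embedding $F_n(\V\vee\W)\hookrightarrow F_n(\V)\times F_n(\W)$, whose injectivity you justify by the (standard, and correctly stated) fact that the equational theory of $\V\vee\W$ is the intersection of those of $\V$ and $\W$. Your route is slightly more canonical and avoids the pull-back-of-generators bookkeeping; the paper's avoids any appeal to equational theories and handles an arbitrary member directly. Both correctly exploit that an $I$-subalgebra is a regular subsemigroup (via $xx'x=x$), which is exactly why the argument stops at $\gH$, $\gL$, $\gR$, as you note.
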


\begin{proof}
 Fix $\K\in \{\gH,\gL,\gR\}$.
 
 Let $S\in \V \vee \W$ be a finitely generated semigroup,
 say $S=\gen{x_1,\ldots,x_n}$.
 Then there exist $V\in \V$, $W\in \W$, $T\leq V\times W$ and an epimorphism
 $\varphi\colon T\twoheadrightarrow S$.
 Since $\varphi$ is onto,
 we can take $a_i\in T$ such that $a_i\varphi=x_i$ for every $i$;
 let $R$ be the $I$-subsemigroup of $T$ generated by $\{a_1,\ldots,a_n\}$.
 Also, for each $i$, we have $a_i=(v_i,w_i)$ for some $v_i\in V$ and $w_i\in W$;
 let $V'=\gen{v_1,\ldots,v_n}$ and $W'=\gen{w_1,\ldots,w_n}$.
 Thus $V'\in \V$ and $W'\in \W$ are $\K$-finite,
 and so is $V'\times W'$ by Lemma~\ref{L:finite-products}.
 Thus, since it is a regular subsemigroup of $V'\times W'$,
 $R$ is $\K$-finite by Lemma~\ref{L:subsmgs}.
 Therefore $S=R\varphi$ is $\K$-finite by Lemma~\ref{L:hom-images}.
\end{proof}

As a consequence,

\begin{cor} \label{C:varS-K-fin}
 If $S$ is a $\K$-finite  $I$-semigroup, for $\K\in \{\gH,\gL,\gR\}$,
 then the variety of $I$-semigroups $\gen{S}$ is locally $\K$-finite.
\end{cor}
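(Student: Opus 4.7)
The plan is to mirror the strategy of the preceding proposition, with the variety $\gen{S}$ playing the role of $\V\vee\W$ and with Theorem~\ref{T:var-characterization} providing the substitute for the finite direct product $V'\times W'$. Fix $\K\in\{\gH,\gL,\gR\}$ and let $T = \gen{x_1,\ldots,x_n}$ be an arbitrary finitely generated $I$-semigroup in $\gen{S}$; we aim to prove that $T$ is $\K$-finite. By Theorem~\ref{T:var-characterization}, there exist an index set $\Lambda$, an $I$-subalgebra $B\le S^{\Lambda}$, and an epimorphism $\psi\colon B\twoheadrightarrow T$. Picking lifts $a_i=(a_i^{\lambda})_{\lambda}\in B$ of the generators $x_i$ and letting $R$ be the $I$-subsemigroup of $B$ generated by $\{a_1,\ldots,a_n\}$, we have $R\psi = T$; by Lemma~\ref{L:hom-images}, it therefore suffices to show $R\in\kfin$.

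For each $\lambda\in\Lambda$, the $\lambda$-coordinate projection sends $R$ onto the $I$-subsemigroup $S_{\lambda}=\gen{a_1^{\lambda},\ldots,a_n^{\lambda}}$ of $S$. Being an $I$-subsemigroup, $S_{\lambda}$ is regular and hence $\K$-finite by Lemma~\ref{L:subsmgs}, with $|S_{\lambda}/\K^{S_{\lambda}}|\le|S/\K^S|$. Consequently $R$ embeds as a regular $I$-subsemigroup in $\prod_{\lambda}S_{\lambda}$, and combining Proposition~\ref{P:reg-subsmg} with the fact that for $\K\in\{\gH,\gL,\gR\}$ the $\K$-relation in a direct product of regular semigroups is computed coordinatewise, we obtain that $\K^R$ is just the restriction to $R$ of the coordinatewise relation inherited from the factors $S_{\lambda}$.

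The main obstacle is that, unlike in the argument of the preceding proposition --- where the product $V'\times W'$ is finite and Lemma~\ref{L:finite-products} applies directly --- the product $\prod_{\lambda}S_{\lambda}$ is in general infinite, so the $\K$-finiteness of the factors does not transfer formally. I plan to overcome this by exploiting the $n$-generation of $R$: any two indices $\lambda,\lambda'$ yielding the same $n$-tuple $(a_1^{\lambda},\ldots,a_n^{\lambda})\in S^n$ produce identical $\lambda$- and $\lambda'$-coordinates on every element of $R$, so that $R$ embeds in a reduced product $\prod_{\mu\in\Lambda'}S_{\mu}$ indexed by the set $\Lambda'\subseteq S^n$ of distinct such tuples, where once again $|S_{\mu}/\K^{S_\mu}|\le|S/\K^S|$. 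A careful analysis of which $\K$-class tuples on $\Lambda'$ can actually be realized by term values of the $n$ generators then yields a uniform finite bound on the number of $\K$-classes of $R$, hence $R\in\kfin$. Lemma~\ref{L:hom-images} finally gives $T\in\kfin$, completing the proof.
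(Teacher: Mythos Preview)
Your proposal diverges from the paper's argument precisely at the point you flag as the main obstacle. The paper's proof is much shorter: having obtained an epimorphism $\psi\colon R\to T$ with $R\leq\prod_{\lambda\in\Lambda}S_\lambda$ (all $S_\lambda=S$), it simply asserts that, because $T$ is finitely generated, \emph{the index set $\Lambda$ may be taken finite}. Once $\Lambda$ is finite, Lemma~\ref{L:finite-products} gives $S^\Lambda\in\kfin$, Lemma~\ref{L:subsmgs} gives $R\in\kfin$ (since $R$ is a regular subsemigroup), and Lemma~\ref{L:hom-images} gives $T\in\kfin$. No coordinatewise analysis of $\K$-class tuples is carried out at all.

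Your approach, by contrast, keeps $\Lambda$ potentially infinite and attempts to bound $|R/\K^R|$ directly. The reduction from $\Lambda$ to $\Lambda'\subseteq S^n$ is correct but only yields a finite index set when $S$ itself is finite --- and that is exactly the situation in which the paper's one-line reduction is already unproblematic. When $S$ is infinite (e.g.\ any infinite group, which is certainly $\gH$-finite), $\Lambda'$ may remain infinite, and then everything hinges on your promised ``careful analysis of which $\K$-class tuples on $\Lambda'$ can actually be realized by term values of the $n$ generators''. That sentence is the entire substance of the proof in the infinite case, and you have left it completely unspecified: you give no indication of what the analysis is or why the map $r\mapsto\bigl(\mu\mapsto K^{S}_{r_\mu}\bigr)$ from $R$ to $(S/\K)^{\Lambda'}$ should have finite image when $\Lambda'$ is infinite. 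As written, that step is a restatement of the goal rather than an argument, so the proposal has a genuine gap at its crucial point.
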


\begin{proof}
 Let $T\in \gen{S}$ be finitely generated.
 Then, by Theorem~\ref{T:var-characterization},
 there exists an epimorphism $\psi\colon R \to T$,
 where $R\leq \Pi_{\lambda \in \Lambda} S_{\lambda}$
 with all $S_{\lambda}=S$.
 Since $T$ is finitely generated,
 the set $\Lambda$ can be taken finite.
 But then $\Pi_{\lambda \in \Lambda} S_{\lambda}$ is $\K$-finite
 by Lemma~\ref{L:finite-products},
 the subsemigroup $R$, being regular,
 is $\K$-finite by Lemma~\ref{L:subsmgs},
 and $T$ is $\K$-finite by Lemma~\ref{L:hom-images}.
\end{proof}

This conclusion fails both for $\gD$ and $\gJ$.
For the first one, we have for instance the bicyclic monoid,
which is $\gD$-finite (cf. Example~\ref{E:bicyclic})
whereas the variety it generates is not locally $\gD$-finite,
as it contains the free monogenic inverse semigroup
(cf. Example~\ref{E:monogenic} and Theorem~\ref{T:FISa=B});
for the second, take the inverse monoid $M$ from Example~\ref{E:a^nb}.
Then $M$ is $\gJ$-finite, but $\gen{M}$ is not locally $\gJ$-finite
as it contains the inverse subsemigroup $\gen{a}$,
which is, again, the free monogenic inverse semigroup.

\subsection{Varieties of completely regular semigroups} \label{subsec:CR}

The lattice of varieties of completely regular semigroups is arguably the most
thoroughly investigated.
In addition, the classes of semigroups involved are quite well behaved
with respect to Green's relations.
As a consequence, a complete characterization of some of the most important
subvarieties of completely regular semigroups can be obtained.

\medskip

We begin with a simple observation, followed by a useful consequence.

\begin{lma} \label{L:Yfinite}
 If $S=(\B;S_{\beta})$ is a finitely generated cryptogroup,
 then $\B$ is a finite band.
\end{lma}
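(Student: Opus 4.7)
The plan is to exploit two easy observations: first, that the band $\B = S/\gH$ in the representation $S=(\B;S_{\beta})$ is a homomorphic image of $S$, and second, that the variety of bands is locally finite.

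First, I would argue that $\B$ is finitely generated. Since $S$ is a cryptogroup, by the earlier theorem characterizing cryptogroups as bands of groups, the partition of $S$ into the $\gH$-classes $S_{\beta}$ is a congruence, with quotient $S/\gH = \B$. In particular, the natural map $\pi \colon S \to \B$ sending $a \mapsto H_a$ is a surjective semigroup homomorphism. So, if $S = \gen{x_1, \ldots, x_n}$, then $\B = \gen{x_1\pi, \ldots, x_n\pi}$ is an $n$-generated band.

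Next, I would invoke the well-known fact, recorded in Remark~\ref{R:finiteness-hierarchy} (citing \cite[Theorem~4.5.3]{How95}), that the variety of bands is locally finite. Hence any finitely generated band is finite, and in particular $\B$ is finite.

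There is essentially no obstacle; the statement is essentially an observation that finite generation passes to the band quotient, combined with local finiteness of bands. The only thing to be careful about is to make sure the reader understands that $\B$ being a quotient of $S$ (rather than merely an index set for the decomposition) is what gives finite generation.
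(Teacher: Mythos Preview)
Your proposal is correct and follows exactly the same route as the paper: pass to the quotient $S/\gH=\B$ via the canonical epimorphism to see that $\B$ is finitely generated, then invoke local finiteness of the variety of bands. The paper's proof is a terser two-line version of what you wrote.
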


\begin{proof}
 Taking the canonical epimorphism $S\twoheadrightarrow S/\gH$,
 it is immediate that $S$ being finitely generated implies that so is $S/\gH$.
 Thus, $S/\gH$ is a finitely generated band.
 Therefore, $S/\gH$ is finite, since the variety of bands is locally finite.
\end{proof}

\begin{cor} \label{C:BG-Hfinite}
 The variety $\mathbf{BG}$ of all cryptogroups is locally $\gH$-finite.
\end{cor}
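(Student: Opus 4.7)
The plan is essentially to observe that the corollary is a one-step consequence of Lemma~\ref{L:Yfinite} once one unpacks the definition of $\gH$-finiteness in the cryptogroup setting.

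First, I would take an arbitrary finitely generated cryptogroup $S \in \mathbf{BG}$. By the theorem quoted above (a cryptogroup is exactly a band of groups), $S$ admits the representation $S = (\B; S_\beta)$, where $\B$ is the band $S/\gH$ and each $S_\beta$ is simultaneously an $\gH$-class of $S$ and a group. In particular, the underlying set $\B$ of the quotient band is in bijective correspondence with the set of $\gH$-classes of $S$.

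Next, I would invoke Lemma~\ref{L:Yfinite}, which asserts exactly that for any finitely generated cryptogroup the band $\B = S/\gH$ is finite. Combining this with the previous observation, $S$ has only finitely many $\gH$-classes, that is, $S \in \hfin$. Since $S$ was an arbitrary finitely generated member of $\mathbf{BG}$, the variety $\mathbf{BG}$ is locally $\gH$-finite.

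There is no real obstacle in this argument; the whole point of isolating Lemma~\ref{L:Yfinite} was to make this corollary immediate. The only thing one must be careful about is the appeal to the structure theorem guaranteeing that in a cryptogroup the partition $S/\gH$ is literally the band $\B$, so that counting $\gH$-classes of $S$ is the same as counting elements of $\B$. Once that identification is made, local finiteness of the variety of bands (used inside the proof of Lemma~\ref{L:Yfinite}) carries all the weight.
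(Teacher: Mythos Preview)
Your proposal is correct and follows essentially the same argument as the paper: take a finitely generated cryptogroup $S=(\B;S_\beta)$, apply Lemma~\ref{L:Yfinite} to conclude that $\B$ is finite, and use the identification $|S/\gH|=|\B|$ to deduce that $S$ has only finitely many $\gH$-classes.
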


\begin{proof}
 Let $S=(\B;S_{\beta})$ be a finitely generated cryptogroup.
 Then, in view of the previous lemma, $\B$ is a finite band.
 Since $|S/\gH|=|\B|$, we conclude that $S$ has finitely many $\gH$-classes.
 Hence $S$ is $\gH$-finite.
\end{proof}

A similar argument shows that the variety $\CR$ of all completely regular semigroups,
and thus all its subvarieties, are, at least, locally $\gD$-finite;
our goal is thus to refine this characterization.

First, we return to Example~\ref{E:Pedro1}.

\begin{ex} \label{E:Pedro2}
 (Continued from Example~\ref{E:Pedro1}.)
 The $\gL$-finite, but not $\gR$-finite, semigroup $P$ from
 Example~\ref{E:Pedro1} is a finitely generated completely regular semigroup;
 in fact, it is a finitely generated member of $\mathbf{ROL^*}$,
 that is, a finitely generated regular orthogroup in which
 $\gL$ is a congruence.
\end{ex}

The fact that $2$ generates the subgroup $2\Z$ together with the fact that
$(2n) \circ 1 = 2n+1$ implies that $P$ is generated as a completely regular semigroup
by $\{ 1,2\}$.

Recall that its $\gR$- and $\gH$-classes coincide: they are $2\Z$ and $\{ 2n+1 \}$,
for each $n \in \Z$.
Therefore, $P$ is a completely regular semigroup, since each $\gH$-class is a subgroup.

To see that $P$ belongs to $\mathbf{ROL^*}$,
we check that is satisfies the identity $x(y^0z)^0x=xy^0x^0z^0x$.
So let $m,n,p \in P$.
If $m$ is odd or if $m$ is even and $n$ is odd
(notice that, in the second case, $H_n=\{n\}$ and thus $n^0=n$),
we have $m\circ (n^0\circ p)^0\circ m = m\circ n^0\circ m^0\circ p^0\circ m$.
So suppose both $m$ and $n$ are even.
Then $m^0=n^0=0$ and:
if $p$ is odd,
$$m\circ (n^0\circ p)^0\circ m = m\circ (0\circ p)^0\circ m
  = m\circ p^0\circ m
  = m\circ p\circ m
  = m+p $$
and
$$m\circ n^0\circ m^0\circ p^0\circ m = m\circ 0\circ 0\circ p\circ m
  = m+p \, ;$$
if $p$ is even,
$$m\circ (n^0\circ p)^0\circ m = m\circ (0\circ p)^0\circ m
  = m\circ p^0\circ m
  = m\circ 0\circ m
  = 2m $$
and
$$m\circ n^0\circ m^0\circ p^0\circ m = m\circ 0\circ 0\circ 0\circ m
  = 2m \, .$$
Therefore,
$m\circ (n^0\circ p)^0\circ m = m\circ n^0\circ m^0\circ p^0\circ m$
for every $m,n,p \in P$, and so $P \in \mathbf{ROL^*}$.
(Clearly $P$ does not belong to $\mathbf{ROR^*}$,
as $\gR$ is not a congruence on $P$:
for instance, $(0,2)\in \gR$ but $(0\circ 1,2\circ 1)=(1,3)\notin \gR$.)


\medskip

The next result summarizes the behaviour of some of the classical subvarieties
of $\CR$ as to being locally $\gH$-, $\gL$-, and $\gR$-finite,
or simply locally $\gD$-finite.
Its conclusions are also pictured in Figure~\ref{F:CRclassification}.

\begin{theorem} \label{T:CRclassification}
 \begin{itemize}  
  \item[(i)] All the subvarieties of $\mathbf{BG}$ are locally $\gH$-finite.
  \item[(ii)] Every variety in the interval $[{\bf LRO},{\bf ROL}^*]$
   is locally $\gL$-finite but not locally $\gR$-finite.
  \item[(iii)] Every variety in the interval $[{\bf RRO},{\bf ROR}^*]$
   is locally $\gR$-finite but not locally $\gL$-finite.
  \item[(iv)] Every variety in the interval $[{\bf RO},{\bf CR}]$
  is locally $\gD$-finite but neither $\gL$-finite nor $\gR$-finite.
 \end{itemize}
\end{theorem}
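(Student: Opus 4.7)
The plan is to treat the four items in turn, noting that (iii) will follow from (ii) by left-right duality. Item (i) is immediate: any finitely generated $S\in\V\subseteq\mathbf{BG}$ is a finitely generated cryptogroup, hence $\gH$-finite by Corollary \ref{C:BG-Hfinite}.

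For the positive half of (ii), I would invoke the decomposition $\mathbf{ROL}^*=\mathbf{LRO}\vee\mathbf{ROBG}$ (noted right after Theorem \ref{T:ROLstar}) together with Proposition \ref{P:VveeW} applied to $\K=\gL$; this reduces the task to showing that both $\mathbf{LRO}$ and $\mathbf{ROBG}$ are locally $\gL$-finite. The inclusion $\mathbf{ROBG}\subseteq\mathbf{BG}$ combined with (i) handles the second joinand. For $\mathbf{LRO}$, I would cite the observation after Theorem \ref{T:ROLstar} that $S\in\mathbf{LRO}$ iff $\gL=\gD$ on $S$, and couple it with the local $\gD$-finiteness of $\CR$ --- which itself follows from $\gD=\gJ$ on $\CR$ (Theorem \ref{T:D=JinCR}) together with the fact that the greatest semilattice quotient of a finitely generated completely regular semigroup is a finitely generated, and hence finite, semilattice. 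Any $\V$ in the interval then inherits local $\gL$-finiteness from $\mathbf{ROL}^*$.

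For the negative half of (ii), I would reuse the semigroup $P$ of Example \ref{E:Pedro2}: using Lemma \ref{L:ROL}, one verifies that $P$ is a semilattice of the group $2\Z$ and the left zero semigroup $2\Z+1$ (both left groups), so $P\in\mathbf{LRO}$; since $P$ is finitely generated but not $\gR$-finite, local $\gR$-finiteness must fail throughout the interval. Item (iii) is obtained by running the dual argument on the left-right opposite $P^{\mathrm{op}}\in\mathbf{RRO}$. For (iv), the upper bound is exactly the local $\gD$-finiteness of $\CR$ recalled above; for the lower bounds, the easy inclusions $\mathbf{LRO}\subseteq\mathbf{RO}$ and $\mathbf{RRO}\subseteq\mathbf{RO}$ (left or right regular bands being regular bands, by direct substitution in the defining identities) ensure that both $P$ and $P^{\mathrm{op}}$ belong to every variety in $[\mathbf{RO},\CR]$, ruling out local $\gL$- and local $\gR$-finiteness.

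The main obstacle I foresee lies in item (ii): threading together the join decomposition of $\mathbf{ROL}^*$, the transfer of local $\gL$-finiteness across joins via Proposition \ref{P:VveeW}, and the $\gL=\gD$ characterization of $\mathbf{LRO}$ into one clean argument. The remainder amounts to verifying $P\in\mathbf{LRO}$ (and dually for $P^{\mathrm{op}}\in\mathbf{RRO}$) and then leaning on left-right duality to derive (iii) from (ii).
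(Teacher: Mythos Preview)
Your argument is correct, and in item~(ii) it is actually tidier than the paper's. For the positive half of~(ii), the paper works directly with the embedding of Theorem~\ref{T:ROLstar}(ii): given a finitely generated $S\in\mathbf{ROL^*}$, it embeds $S$ into $L\times B$ with $L\in\mathbf{LRO}$ and $B$ a right regular band, and bounds $|S/\gL|$ by the product $|L/\gL|\cdot|S\pi_B/\gL|$. You instead invoke the join decomposition $\mathbf{ROL^*}=\mathbf{LRO}\vee\mathbf{ROBG}$ together with Proposition~\ref{P:VveeW}, reducing to the local $\gL$-finiteness of each joinand; this is more modular and sidesteps any worry about whether the factor $L$ in the embedding can be taken finitely generated. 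For the negative half, the paper only verifies $P\in\mathbf{ROL^*}$ (via the identity in Theorem~\ref{T:ROLstar}(iii)) and then argues \emph{indirectly} that $\mathbf{LRO}$ cannot be locally $\gR$-finite, since otherwise $\mathbf{ROL^*}=\mathbf{LRO}\vee\mathbf{ROBG}$ would be locally $\gH$-finite by Proposition~\ref{P:VveeW}. Your observation that $P$ already lies in $\mathbf{LRO}$ --- as a two-element semilattice of the group $2\Z$ and the left-zero semigroup $2\Z+1$, via Lemma~\ref{L:ROL} --- makes the negative half immediate throughout the interval and renders that indirect step unnecessary. Items~(iii) and~(iv) are then handled the same way in both proofs, by duality and by passing to the larger interval.
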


\begin{proof}
 (i) By Corollary~\ref{C:BG-Hfinite}, we know that $\mathbf{BG}$ is locally $\gH$-finite;
 that the same is true for all if its subvarieties follows trivially.
 
 (ii) Let $S\in \mathbf{ROL^*}$ be a finitely generated semigroup.
 As a consequence of Theorem~\ref{T:ROLstar},
 we have that $S$ embeds in a direct product $L\times B$
 of a left regular orthogroup $L$ and a right regular band $B$.
 Since $(x,a)\gL^S (y,b)$ if and only if $x\gL^L y$ and $a\gL^B b$,
 we have that $|S/\gL^S|\leq |L/\gL^L|\times |B/\gL^B|$.
 More precisely, $|S/\gL^S|\leq |L/\gL^L|\times |S\pi_B/\gL^{S\pi_B}|$,
 where $S\pi_B$ is the subband of $B$ generated by the projection of $S$ into $B$.
 Now, since $\gL=\gD$ on $L$ (cf.~comment regarding Lemma~\ref{L:ROL}),
 then $L$ has finitely many $\gL$-classes.
 Also $S\pi_B$ has finitely many $\gL$-classes, since it is a finitely generated band,
 and so itself finite.
 Therefore $S$ has finitely many $\gL$-classes.
 Hence, $\mathbf{ROL^*}$ is locally $\gL$-finite ---
 but not locally $\gR$- (and $\gH$-)finite, by virtue of Example~\ref{E:Pedro2}.
 
 As for $\mathbf{LRO}$, it is at least locally $\gL$-finite
 as it is contained in $\mathbf{ROL^*}$.
 Were it also locally $\gR$-finite, then it would be locally $\gH$-finite,
 along with $\mathbf{ROL^*}=\mathbf{LRO} \vee \mathbf{ROBG}$ as a consequence of
 Proposition~\ref{P:VveeW}, a contradiction.
 
 (iii) Dual from (ii).
 
 (iv) Since $\mathbf{RO}$, the variety of regular orthogroups,
 contains $\mathbf{ROL^*}$ as a subvariety,
 by (ii) $\mathbf{RO}$ cannot be locally $\gR$-finite
 --- and, dually by (iii), neither can it be locally $\gL$-finite.
 Thus, $\mathbf{RO}$ is just locally $\gD$-finite,
 and since so is $\CR$, the same happens for all the varieties in the interval
 $[\mathbf{RO},\CR]$.
\end{proof}


Notice that, by (i), the particularly important varieties
of \emph{completely simple} semigroups, that is, completely regular semigroups
in which $\gJ=S\times S$,
and of Clifford semigroups are all locally $\gH$-finite.

\begin{figure}
\[
 \parbox{15cm}{
 \begin{pspicture}(-2,-1)(13,9)
  \psline[linewidth=.4pt](2.5,.8)(2.5,-.2)
  \psdot(1,2.2)
  \uput[l](1,2.2){\small $\mathbf{LRO}$}
  \psdot(1,3.2)
  \uput[l](1.1,3.2){\small $\mathbf{ROL^{\ast}}$}
  \psdot(2.5,.8)
  \uput[l](2.5,.8){\small $\mathbf{ROBG}$}
  \psdot(2.5,-.2)
  \uput[l](2.5,-.2){\small $\mathbf{SG}$}
  \psdot(2.5,4.2)
  \uput[u](2.5,4.2){\small $\mathbf{RO}$}
  \psdot(4,2.2)
  \uput[r](4,2.2){\small $\mathbf{RRO}$}
  \psdot(4,3.2)
  \uput[r](4,3.2){\small $\mathbf{ROR^{\ast}}$}
  \psdot(7.5,2.8)
  \uput[r](7.5,2.8){\small $\mathbf{OBG}$}
  \psdot(5,5.2)
  \uput[u](5,5.2){\small $\mathbf{OG}$}
  \psdot(7.5,6.2)
  \psdot(7.5,7.2)
  \uput[u](7.5,7.2){\small $\mathbf{CR}$}
  \psdot(10,5.2)
  \uput[r](10,5.2){\small $\mathbf{BG}$}
  \psline[linewidth=.4pt](1,2.2)(1,3.2)(2.5,4.2)(4,3.2)(4,2.2)
  \psline[linewidth=.4pt](1,3.2)(2.5,.8)(4,3.2)
  \psline[linewidth=.4pt](2.5,4.2)(5,5.2)(7.5,6.2)(7.5,7.2)
  \psline[linewidth=.4pt](7.5,6.2)(10,5.2)(7.5,2.8)(5,5.2)
  \psline[linewidth=.4pt](2.5,.8)(7.5,2.8)
  \psline[linewidth=.4pt](1,2.2)(2.5,-.2)(4,2.2)
  \uput[u](8.8,7.8){locally $\gD$-finite}
  \pscurve[linewidth=.8pt,linestyle=dashed,dash=2pt 2pt](0,4.7)(2.5,3.8)(7.5,5.8)(10.4,8)
  \uput[d](11,4.5){locally $\gH$-finite}
  \pscurve[linewidth=.8pt,linestyle=dashed,dash=2pt 2pt](11.4,5.4)(10,5.6)(5.5,2.3)(4,1.6)(0,1)
  \uput[l](.8,1.6){locally $\gL$-finite}
  \psccurve[linewidth=.8pt,linestyle=dashed,dash=2pt 2pt,](1,1.8)(1.5,2.7)(1,3.6)(.5,2.7)
  \uput[l](3.6,3){locally}
  \uput[l](3.6,2.6){$\gR$-finite}
  \psccurve[linewidth=.8pt,linestyle=dashed,dash=2pt 2pt](4,1.8)(4.5,2.7)(4,3.6)(3.5,2.7)
\end{pspicture}}
\]
\caption{Classification of some completely regular varieties}\label{F:CRclassification}
\end{figure}
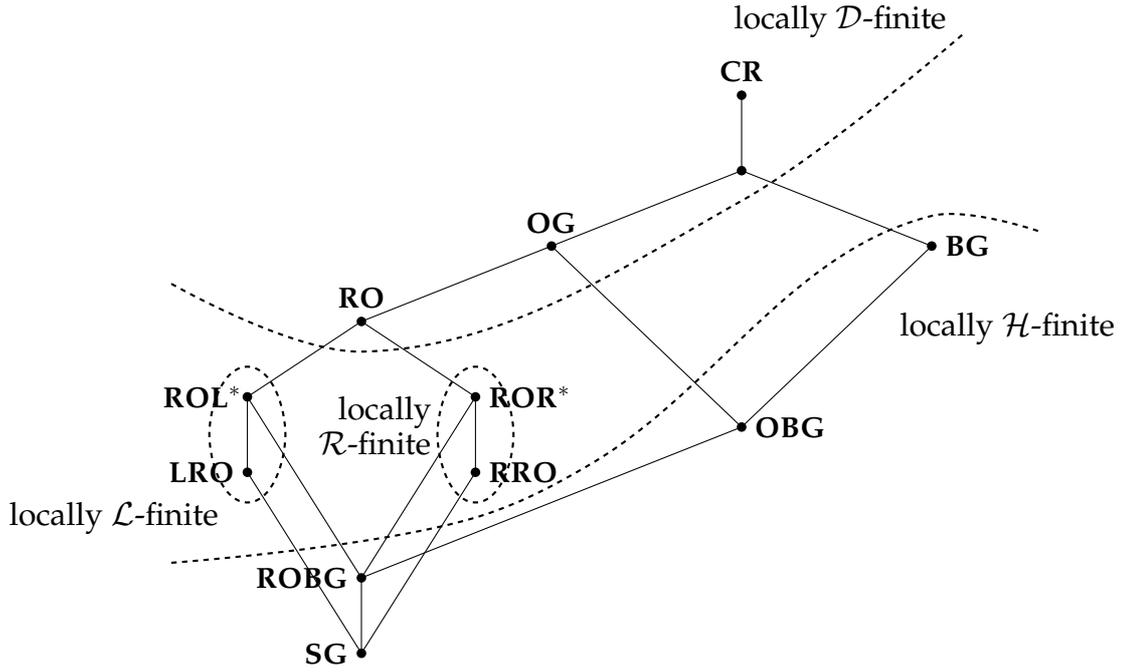

\medskip

\subsection{Varieties of inverse semigroups} \label{subsec:INV}

Since in any inverse semigroup $S$ and in any $\gD$-class $D$ of $S$
there is a bijection between the $\gL$-classes and the $\gR$-classes in $D$,
it follows that an inverse semigroup is $\gL$-finite if and only if it is $\gR$-finite.
Thus,

\begin{prop}
 For any variety $\V$ of inverse semigroups, the following are equivalent:
 \begin{itemize}
  \item[(i)] $\V$ is locally $\gH$-finite.
  \item[(ii)] $\V$ is locally $\gL$-finite.
  \item[(iii)] $\V$ is locally $\gR$-finite.
 \end{itemize}
\end{prop}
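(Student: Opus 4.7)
The plan is to reduce the variety-level statement to a stronger fact that holds for individual inverse semigroups, namely that an inverse semigroup is $\gL$-finite if and only if it is $\gR$-finite if and only if it is $\gH$-finite. Once this is established, the equivalence of (i), (ii), (iii) at the level of locally $\K$-finite varieties follows immediately: a variety $\V$ is locally $\K$-finite precisely when every finitely generated $S \in \V$ is $\K$-finite, and passing between the three $\K$'s may be done one semigroup at a time.

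The easy directions are (i)$\Rightarrow$(ii) and (i)$\Rightarrow$(iii): since $\gH = \gL \cap \gR$ (Remark~\ref{R:class-poset}), every $\gH$-class sits inside a unique $\gL$-class and a unique $\gR$-class, giving canonical surjections $S/\gH \twoheadrightarrow S/\gL$ and $S/\gH \twoheadrightarrow S/\gR$. Hence $|S/\gH|\geq |S/\gL|$ and $|S/\gH|\geq |S/\gR|$ for any semigroup, and $\gH$-finiteness passes to $\gL$- and $\gR$-finiteness.

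For the hard directions I would exploit the structural peculiarity of inverse semigroups that each $\gL$-class (respectively, each $\gR$-class) contains exactly one idempotent, so that in an inverse semigroup $|S/\gL| = |E(S)| = |S/\gR|$; this restricts within any single $\gD$-class $D$ to give $|D/\gL| = |D/\gR|$. Combined with the standard egg-box picture (the $\gH$-classes of $D$ are precisely the intersections of its $\gL$-classes with its $\gR$-classes, since $\gD = \gL \circ \gR$), one obtains $|D/\gH| = |D/\gL|\cdot|D/\gR| = |D/\gL|^2$. Summing over $\gD$-classes and using $(\sum_D m_D)^2 \geq \sum_D m_D^2$, this gives
\[
 |S/\gH| \;=\; \sum_{D\in S/\gD} |D/\gL|^2 \;\leq\; \Bigl(\sum_{D\in S/\gD} |D/\gL|\Bigr)^{\!2} \;=\; |S/\gL|^2 \, .
\]
Thus $\gL$-finiteness of $S$ implies $\gH$-finiteness of $S$, and dually for $\gR$; this closes (ii)$\Rightarrow$(i) and (iii)$\Rightarrow$(i).

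There is no real obstacle: the argument is essentially bookkeeping on the egg-box picture, leveraging the bijection $S/\gL \leftrightarrow E(S) \leftrightarrow S/\gR$ that is specific to inverse semigroups. The only point worth care is that the equivalence is asserted for varieties, so the semigroup-level statement must be applied to arbitrary finitely generated members of $\V$, but since $\V$ consists of inverse semigroups this is automatic and no further variety-theoretic machinery (such as Lemma~\ref{L:hom-images} or Proposition~\ref{P:VveeW}) is needed.
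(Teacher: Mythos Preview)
Your proposal is correct and rests on the same key observation as the paper: in an inverse semigroup each $\gL$-class and each $\gR$-class contains a unique idempotent, so within every $\gD$-class the $\gL$-classes and $\gR$-classes are in bijection. The only cosmetic difference is that the paper routes the implication as (ii)$\Leftrightarrow$(iii) and then invokes $\hfin=\lfin\cap\rfin$ (Remark~\ref{R:class-poset}) to get (iii)$\Rightarrow$(i), whereas you prove (ii)$\Rightarrow$(i) directly via the quantitative bound $|S/\gH|\leq |S/\gL|^2$; both arguments are equally short and use the same ingredients.
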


\begin{proof}
 We have (i)~$\Rightarrow$~(ii) as $|S/\gL|\leq |S/\gH|$ in any semigroup,
 (ii)~$\Leftrightarrow$~(iii) by the preceding observation,
 and consequently (iii)~$\Rightarrow$~(i) by Remark~\ref{R:class-poset}.
\end{proof}

\medskip

Recall from Theorem~\ref{T:locDfin=locJfin} that a variety of inverse semigroups
is locally $\gD$-finite if and only if it is locally $\gJ$-finite.
Notice that it follows from the proof of this result that

\begin{cor}
 If $\V$ is a locally $\gJ$-finite variety of inverse semigroups,
 then $\V$ is a variety of epigroups.
\end{cor}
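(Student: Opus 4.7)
The plan is to piggy-back on the argument already carried out in the proof of Theorem~\ref{T:locDfin=locJfin}. To show that $\V$ is a variety of epigroups, I must verify that every member $S$ of $\V$ is an epigroup, that is, that for every $a\in S$ some power $a^m$ lies in a subgroup of $S$; equivalently (since $S$ is inverse) that $a^ma^{-m}=a^{-m}a^m$ for some $m\geq 1$.

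First, I would reduce the problem to the monogenic inverse case: given $S\in \V$ and $a\in S$, the monogenic inverse subsemigroup $\gen{a}$ is a subalgebra of $S$ and is generated by a single element, so $\gen{a}\in \V$ is finitely generated. It therefore suffices to show that $\gen{a}$ itself is an epigroup, because the identity $a^ma^{-m}=a^{-m}a^m$ holds in $\gen{a}$ iff it holds in $S$, and in that case $a^m$ lies in the group $H$-class of $a^ma^{-m}$ in $S$.

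Second, I would invoke verbatim the case analysis used in the inverse part of the proof of Theorem~\ref{T:locDfin=locJfin}. By \cite[Corollary~4]{Bropp} the monogenic inverse semigroup $\gen{a}$ falls into one of four types; the last two (a bicyclic subsemigroup or $\gen{a}\cong FIS_a$) are ruled out because, via Theorem~\ref{T:FISa=B}, either would force $\V$ to contain $\gen{B}$, which is not locally $\gJ$-finite, contradicting the hypothesis. Hence only the two remaining cases occur: $a$ has finite order, or $\gen{a}$ contains an infinite cyclic subgroup. In both, there exists $m\geq 1$ with $a^ma^{-m}=a^{-m}a^m$, so $\gen{a}$ is an epigroup, and therefore so is $S$.

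I do not foresee a real obstacle, since the technical heavy lifting has already been done in Theorem~\ref{T:locDfin=locJfin}. The only point worth being careful about is the passage from ``$\gen{a}$ is an epigroup'' to ``$a$ has a power in a subgroup of $S$'': I would explicitly point out that the relevant characterization of being an epigroup in the inverse setting is the elementwise identity $a^ma^{-m}=a^{-m}a^m$, which is inherited by any inverse oversemigroup, so no uniform bound on $m$ is needed and the result applies to arbitrary (not merely finitely generated) members of $\V$.
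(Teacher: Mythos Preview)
Your proposal is correct and follows exactly the route the paper intends: the corollary is stated immediately after Theorem~\ref{T:locDfin=locJfin} with the remark ``it follows from the proof of this result,'' and your argument simply unpacks that remark by re-running the Brough case analysis on the monogenic inverse subsemigroup $\gen{a}$. Your explicit observation that the epigroup conclusion extends to \emph{all} members of $\V$ (not just finitely generated ones), because the relevant property $a^ma^{-m}=a^{-m}a^m$ is detected already in $\gen{a}\in\V$, is a welcome clarification that the paper leaves implicit.
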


The converse, however, is not true:

\begin{ex} \label{E:C2}
 $\mathbf{C_2}$, the subvariety of $\I$ defined by the identity $x^2=x^3$,
 is a variety of epigroups, but is not locally $\gJ$-finite.
\end{ex}

Indeed, every $S \in \mathbf{C_2}$ is an epigroup since, for every $a \in S$,
we have $a^2 \in E(S)$ and therefore $a^2a^{-2} = a^{-2}a^2$.
Write $A = \{ a,b,c\}$ and let $T$ be defined by the inverse semigroup presentation
$${\rm InvS}\langle A \mid
    u^2 = 0\; (u \in (A \cup A^{-1})^+,\; \overline{u} \neq 1)\rangle.$$
Clearly, $T \in \mathbf{C_2}$. 

Let $\alpha$ be the (right) infinite square-free word due to
Morse and Hedlund in connection with their solution of the Burnside problem
with $n=2$ for semigroups (\cite{MoHe38}),
and let $\alpha_n$ denote the length $n$ prefix of $\alpha$ for every $n \geq 1$.
Clearly, $L(\alpha_n) = MT(\alpha_n) = S\Gamma(\alpha_n)$ for every $n$.
Suppose that $m < n$ and $\alpha_m \gJ \alpha_n$ in $T$.
Then there exist $v,w \in (A \cup A^{-1})^*$ such that
$\alpha_m = v\alpha_nw$ holds in $T$,
hence $S\Gamma(\alpha_n) \approx S\Gamma(v\alpha_mw)$.
It follows that $\alpha_m$ labels some path in $S\Gamma(\alpha_n) = L(\alpha_n)$,
a contradiction since $m > n$ and $\alpha_m,\alpha_n \in A^+$. 

Thus each $\alpha_n$, for $n \geq 1$, belongs to a distinct $\gJ$-classe of $T$
and so $T$ is not $\gJ$-finite.
Therefore $\mathbf{C_2}$ is not locally $\gJ$-finite.

\medskip

Next, we summarize the classification of some of subvarieties of $\I$
with respect to the properties under discussion.
Its conclusions are also displayed in Figure~\ref{F:Iclassification}.

\begin{theorem} \label{T:INVclassification}
 \begin{itemize}
  \item[(i)] The varieties $\gen{B_2^1}$ and $\gen{M_n}$, for any positive integer $n$,
  are locally $\gH$-finite.
   As a consequence, so are the subvarieties $\G\vee \gen{M_n}$,
   for any positive integer $n$, and all the subvarieties of $\G\vee \gen{B_2^1}$.
  \item[(ii)] The varieties $\mathbf{C_2}$ and $\gen{B}$,
   as well as all the varieties of inverse semigroups that contain either
   $\mathbf{C_2}$ or $\gen{B}$, are not locally $\K$-finite,
   for any $\K \in \{\gH,\gL,\gR,\gD,\gJ\}$.
 \end{itemize}
\end{theorem}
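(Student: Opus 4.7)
The plan is to handle part~(i) by showing that the generating semigroups are themselves $\gH$-finite and then propagating the conclusion via the transfer results already established, and to handle part~(ii) by exhibiting, in each case, a finitely generated member with infinitely many $\gJ$-classes.

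For part~(i), I would first observe that $B_2^1$ is a six-element semigroup and that each Rees quotient $M_n=FIS_a/I_n$ is finite (a standard feature of the monogenic free inverse semigroup: only finitely many Munn trees survive the quotient by the ideal generated by $a^n$). Both are therefore trivially $\gH$-finite, so Corollary~\ref{C:varS-K-fin} immediately yields that $\gen{B_2^1}$ and $\gen{M_n}$ are locally $\gH$-finite varieties of $I$-semigroups. I would then note that $\G$ is locally $\gH$-finite in the strongest possible way, since every group is a single $\gH$-class. Applying Proposition~\ref{P:VveeW} to the two locally $\gH$-finite inputs $\G$ and $\gen{M_n}$ (respectively $\G$ and $\gen{B_2^1}$) gives that $\G\vee \gen{M_n}$ and $\G\vee \gen{B_2^1}$ are locally $\gH$-finite. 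Since local $\gH$-finiteness is inherited by subvarieties (any finitely generated member of a subvariety is a finitely generated member of the ambient variety), every subvariety of $\G\vee \gen{B_2^1}$ inherits the conclusion at once.

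For part~(ii), I would exploit that, by Remark~\ref{R:finiteness-hierarchy}, local $\gJ$-finiteness is the weakest of the five properties, so it suffices to defeat it. For $\mathbf{C_2}$, the inverse semigroup $T$ constructed in Example~\ref{E:C2} is a three-generated member with infinitely many $\gJ$-classes, which directly contradicts local $\gJ$-finiteness. For $\gen{B}$, Theorem~\ref{T:FISa=B} identifies this variety with $\gen{FIS_a}$, and the monogenic free inverse semigroup $FIS_a$, belonging to $\gen{B}$, is itself finitely generated yet has infinitely many $\gJ$-classes by Example~\ref{E:monogenic}. Any variety of inverse semigroups containing $\mathbf{C_2}$ (resp.\ $\gen{B}$) thus contains the corresponding counterexample, fails to be locally $\gJ$-finite, and consequently fails to be locally $\K$-finite for every $\K\in\{\gH,\gL,\gR,\gD,\gJ\}$.

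The main obstacle is essentially neutralized by the preceding sections: the only real content is the finiteness of $M_n$ and the observation that $\G$ is locally $\gH$-finite, after which everything reduces to bookkeeping with Proposition~\ref{P:VveeW}, Corollary~\ref{C:varS-K-fin}, and the examples already at hand. If any step required a closer look, it would be the inheritance under subvarieties in the last clause of~(i), but this is just the trivial remark that a finitely generated member of a subvariety is a finitely generated member of the ambient variety.
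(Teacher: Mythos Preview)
Your proposal is correct and follows essentially the same route as the paper: part~(i) uses the finiteness of $B_2^1$ and $M_n$ together with Proposition~\ref{P:VveeW} and the trivial local $\gH$-finiteness of $\G$, and part~(ii) reduces to Example~\ref{E:C2} and Example~\ref{E:monogenic} via Theorem~\ref{T:FISa=B}. The only cosmetic difference is that the paper invokes Remark~\ref{R:finiteness-hierarchy}(2) (finitely generated varieties are locally finite, hence locally $\gH$-finite) rather than Corollary~\ref{C:varS-K-fin} to handle $\gen{B_2^1}$ and $\gen{M_n}$, but both routes are immediate from the finiteness of the generating semigroups.
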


\begin{proof}
 (i) Since $B_2^1$, the $5$-element Brandt semigroup with an identity adjoined,
 and $M_n$, for $n$ positive integer, are finite semigroups,
 the varieties they generate are finitely generated,
 and thus locally $\gH$-finite, as mentioned in Remark~\ref{R:finiteness-hierarchy}~(2).
 
 That $\G\vee \gen{M_n}$ and  $\G\vee \gen{B_2^1}$ are
 locally $\gH$-finite follows from Proposition~\ref{P:VveeW},
 the last assertion being now immediate.
 
 
 (ii) By part (1) of Remark~\ref{R:finiteness-hierarchy},
 it suffices to show that these varieties fail to be locally $\gJ$-finite.
 
 That $\mathbf{C_2}$ is not locally $\gJ$-finite has been established
 in Example~\ref{E:C2}.
 
 As seen is Example~\ref{E:monogenic},
 the monogenic inverse semigroup is not $\gJ$-finite.
 Since $FIS_a$ is finitely generated and $\gen{B}=\gen{FIS_a}$,
 we conclude that $\gen{B}$ cannot be locally $\gJ$-finite.
 
 Again, the last claim is therefore immediate.
\end{proof}

\begin{figure}
\[
 \parbox{8cm}{
  \begin{pspicture}(0,0)(8,8)
   \psline[linewidth=.4pt](1.4,.2)(6.4,5.2)
   \psline[linewidth=.4pt](.2,1.4)(5.2,6.4)
   \psline[linewidth=.4pt](2.4,4.2)(5.2,7)
   \psline[linewidth=.4pt](3.6,2.4)(2.4,3.6)(2.4,4.2)
   \psline[linewidth=.4pt](4.6,3.4)(3.4,4.6)(3.4,5.2)
   \psline[linewidth=.4pt](5.6,4.4)(4.4,5.6)(4.4,6.2)
   \psline[linecolor=medgray,linewidth=.4pt](3,1.8)(4.6,2.8)(6.4,4.6)
   \psline[linecolor=medgray,linewidth=.4pt](1.8,3)(3.4,4)(5.2,5.8)
   \psline[linecolor=medgray,linewidth=.4pt,linestyle=dashed,dash=1pt 1pt](6.4,4.6)(7.2,5.4)
   \psline[linecolor=medgray,linewidth=.4pt](4.6,3.4)(4.6,2.8)(3.4,4)(3.4,4.6)
   \psline[linecolor=medgray,linewidth=.4pt](5.6,4.4)(5.6,3.8)(4.4,5)(4.4,5.6)
   \psline[linewidth=.4pt,linestyle=dashed,dash=1pt 1pt](6.4,5.2)(7.9,6.7)
   \psline[linewidth=.4pt,linestyle=dashed,dash=1pt 1pt](5.2,6.4)(6.7,7.9)
   \psline[linewidth=.4pt,linestyle=dashed,dash=1pt 1pt](5.2,7)(6.7,8.5)
   \psline[linecolor=medgray,linewidth=.4pt,linestyle=dashed,dash=1pt 1pt](5.2,5.8)(6.2,6.8)
   \psline[linewidth=.4pt,linestyle=dashed,dash=1pt 1pt](7.2,5.4)(7.9,6.1)
   \psline[linewidth=.4pt,linestyle=dashed,dash=1pt 1pt](6.2,6.8)(6.7,7.3)
   \psline[linecolor=red,linewidth=.5pt](3,1.8)(3.6,2.4)
   \psline[linecolor=red,linewidth=.5pt](1.8,3)(2.4,3.6)
   \psline[linecolor=red,linewidth=.5pt](4.6,2.8)(4.6,3.4)
   \psline[linecolor=red,linewidth=.5pt](5.6,3.8)(5.6,4.4)
   \psline[linecolor=red,linewidth=.5pt](3.4,4)(3.4,4.6)
   \psline[linecolor=red,linewidth=.5pt](4.4,5)(4.4,5.6)
   \psdot[linecolor=medgray](1.4,.2)
   \uput[dr](1.4,.2){\medgray \footnotesize $\T$}
   \psdot[linecolor=medgray](.2,1.4)
   \uput[l](.2,1.4){\medgray \footnotesize $\G$}
   \psline[linecolor=medgray,linewidth=.4pt](1.4,.2)(.2,1.4)
   \psdot[linecolor=medgray](2,.8)
   \uput[dr](1.9,.8){\medgray \footnotesize $\mathbf{SL}=\mathbf{C_1}$}
   \psdot[linecolor=medgray](.8,2)
   \uput[l](.8,2){\medgray \footnotesize $\G \vee \mathbf{SL}=\mathbf{SG}$}
   \psline[linecolor=medgray,linewidth=.4pt](.8,2)(2,.8)
   \psdot[linecolor=medgray](2.6,1.4)
   \uput[dr](2.5,1.4){\medgray \footnotesize $\mathbf{CSInv}=\gen{B_2}=\gen{M_2}$}
   \psdot[linecolor=medgray](1.4,2.6)
   \uput[l](1.4,2.6){\medgray \footnotesize $\G \vee \mathbf{CSInv}=\mathbf{SInv}$}
   \psline[linecolor=medgray,linewidth=.4pt](1.4,2.6)(2.6,1.4)
   \psdot[linecolor=medgray](3,1.8)
   \uput[r](2.9,1.7){\medgray \footnotesize $\left\langle\right. \!\!{B_2}^1 \!\! \left.\right\rangle$}
   \psdot[linecolor=medgray](1.8,3)
   \uput[l](1.85,3.1){\medgray \footnotesize $\G \vee \left\langle\right. \!\!{B_2}^1 \!\! \left.\right\rangle$}
   \psline[linecolor=medgray,linewidth=.4pt](3,1.8)(1.8,3)
   \psdot(3.6,2.4)
   \uput[l](3.65,2.45){\footnotesize $\mathbf{C_2}$}
   \psdot(2.4,3.6)
   \uput[l](2.4,3.6){\footnotesize $\G\vee \mathbf{C_2}$}
   \psdot(2.4,4.2)
   \uput[l](2.45,4.3){\footnotesize $[x^2\in G]$}
   \psdot[linecolor=medgray](4.6,2.8)
   \uput[r](4.6,2.8){\medgray \footnotesize $\gen{M_3}$}
   \psdot(4.6,3.4)
   \uput[l](4.65,3.4){\footnotesize $\mathbf{C_3}$}
   \psdot(3.4,4.6)
   \uput[r](3.4,4.6){\footnotesize $\G\vee \mathbf{C_3}$}
   \psdot[linecolor=medgray](3.4,4)
   \uput[r](3.4,4){\medgray \footnotesize $\G\vee \gen{M_3}$}
   \psdot(3.4,5.2)
   \uput[l](3.45,5.3){\footnotesize $[x^3\in G]$}
   \psdot[linecolor=medgray](5.6,3.8)
   \uput[r](5.6,3.8){\medgray \footnotesize $\gen{M_4}$}
   \psdot(5.6,4.4)
   \uput[l](5.65,4.45){\footnotesize $\mathbf{C_4}$}
   \psdot(4.4,5.6)
   \uput[r](4.4,5.6){\footnotesize $\G\vee \mathbf{C_4}$}
   \psdot[linecolor=medgray](4.4,5)
   \uput[r](4.4,5){\medgray \footnotesize $\G\vee \gen{M_4}$}
   \psdot(4.4,6.2)
   \uput[l](4.45,6.25){\footnotesize $[x^4\in G]$}
   \psdot(7.2,5.4)
   \uput[r](7.2,5.4){\footnotesize $\gen{B}=\gen{FIS_a}$}
   \uput[r](5.5,2){\medgray \footnotesize locally $\gH$-finite}
   \uput[l](1,6){\footnotesize not locally $\K$-finite}
   \psline[linecolor=medgray,linewidth=.4pt](1.4,.2)(3,1.8)
   \psline[linecolor=medgray,linewidth=.4pt](.2,1.4)(1.8,3)
 \end{pspicture}}
\]
\caption{Classification of some inverse varieties}\label{F:Iclassification}
\end{figure}
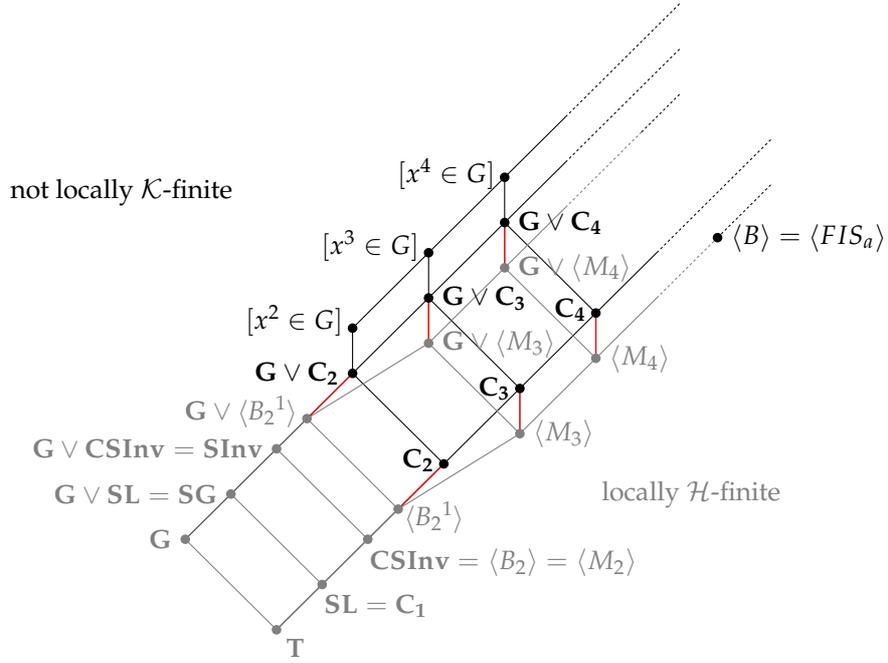

What happens in the open invervals $]\gen{B_2^1},\mathbf{C_2}[$
and $]\gen{M_n},\mathbf{C_n}[$ (for $n\ge 3$)
and whether or not there are strictly locally $\gD$-finite varieties
of inverse semigroups remains unanswered.
The fact that the variety $\gen{B_2^1}$ has no cover in the sublattice
of varieties of combinatorial inverse semigroups (see~\cite{Kad92})
indicates that for instance the first of these questions
may not be an entirely trivial one.

\subsection{Varieties of strict $I$-semigroups} \label{subsec:SIS}

Theorems~\ref{T:CRclassification} and \ref{T:INVclassification}
together answer the question for the case of the variety $\SI$.

\begin{cor}
 The variety $\SI$, and all its subvarieties, are locally $\gH$-finite.
\end{cor}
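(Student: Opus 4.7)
The plan is to derive the corollary as a direct consequence of Corollary~\ref{C:SI-join} together with Theorem~\ref{T:CRclassification}(i), Proposition~\ref{P:VveeW}, and Remark~\ref{R:finiteness-hierarchy}(2). There is really no novel content here: the hard work was done in establishing that joins preserve local $\gH$-finiteness in the variety of $I$-semigroups.

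First, I would invoke Corollary~\ref{C:SI-join} to rewrite $\SI = \mathbf{ONBG} \vee \gen{B_2}$. The strategy is then to show that each of the two joinands is locally $\gH$-finite as a variety of $I$-semigroups, and combine them via Proposition~\ref{P:VveeW}.

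For the first joinand, $\mathbf{ONBG}$ is by definition a variety of orthocryptogroups, hence a subvariety of $\mathbf{BG}$; by Theorem~\ref{T:CRclassification}(i) (equivalently, by Corollary~\ref{C:BG-Hfinite}), $\mathbf{ONBG}$ is locally $\gH$-finite. For the second joinand, $\gen{B_2}$ is finitely generated by the finite semigroup $B_2$, so it is a locally finite variety; by Remark~\ref{R:finiteness-hierarchy}(2), it is locally $\gH$-finite. Applying Proposition~\ref{P:VveeW} with $\K = \gH$, the join $\mathbf{ONBG} \vee \gen{B_2} = \SI$ is locally $\gH$-finite.

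Finally, local $\gH$-finiteness is trivially inherited by subvarieties: if $\V \subseteq \SI$ and $S \in \V$ is finitely generated, then $S \in \SI$ is a finitely generated semigroup in a locally $\gH$-finite variety, hence $\gH$-finite. The only point one should be careful about is that Proposition~\ref{P:VveeW} is stated for varieties of $I$-semigroups, which is exactly the ambient setting here (both $\mathbf{ONBG} \subseteq \CR$ and $\gen{B_2} \subseteq \I$ are naturally viewed as subvarieties of the variety of $I$-semigroups), so the hypothesis applies without difficulty.
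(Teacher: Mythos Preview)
Your proof is correct and follows essentially the same approach as the paper: decompose $\SI = \mathbf{ONBG}\vee\gen{B_2}$ via Corollary~\ref{C:SI-join}, handle each joinand separately (the paper cites Theorem~\ref{T:INVclassification} for $\gen{B_2}$, whose proof is exactly your direct argument via Remark~\ref{R:finiteness-hierarchy}(2)), and combine via Proposition~\ref{P:VveeW}. The only addition is that you make the inheritance by subvarieties explicit, which the paper leaves implicit.
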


\begin{proof}
 By Corollary~\ref{C:SI-join}, $\SI=\mathbf{ONBG}\vee \gen{B_2}$.
 Since they are both locally $\gH$-finite,
 the first by Theorem~\ref{T:CRclassification}
 as $\mathbf{ONBG} \subseteq \mathbf{BG}$
 (in fact, $\mathbf{ONBG} \subseteq \mathbf{ROBG}$)
 and the second by Theorem~\ref{T:INVclassification},
 we conclude by Proposition~\ref{P:VveeW} that so is $\SI$.
\end{proof}

\subsection{Varieties of semigroups} \label{subsec:SMG}

The more complex nature of the lattice of all semigroup varieties
makes it harder to achieve a characterization as sucessful as that
of varieties of completely regular semigroups,
or even as complete as that of varieties of inverse semigroups.

We do know from Theorem~\ref{T:locDfin=locJfin} that every locally
$\gJ$-finite variety of semigroups is necessarily locally $\gD$-finite.
In this section, we list some other conclusions.

\medskip

One way of partitioning the lattice of all semigroup varieties
is the one described in \cite{SVV09}.
For the convenience of the reader, Figure~\ref{F:SEM} displays their exact,
very useful, diagram.

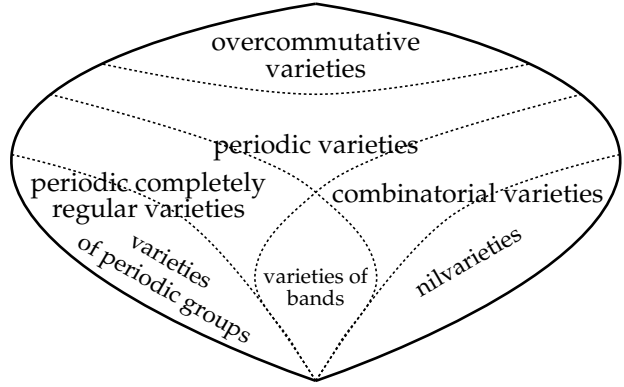
\begin{figure}[!h]
  \[
   \parbox{8cm}{
    \begin{pspicture}(-4,0)(4,5)
    \pscurve[linewidth=1pt](0,0)(4,3)(0,5)
    \pscurve[linewidth=1pt](0,0)(-4,3)(0,5)
    \pscurve[linewidth=.6pt,linestyle=dashed,dash=1pt 1pt](-2.8,4.2)(0,3.8)(2.8,4.2)
    \pscurve[linewidth=.6pt,linestyle=dashed,dash=1pt 1pt](0,0)(1.8,2.2)(4,3)
    \pscurve[linewidth=.6pt,linestyle=dashed,dash=1pt 1pt](0,0)(-1.8,2.2)(-4,3)
    \pscurve[linewidth=.6pt,linestyle=dashed,dash=1pt 1pt](0,0)(-.3,.48)(-.8,1.4)(0,2.5)(.9,3)(3.5,3.8)
    \pscurve[linewidth=.6pt,linestyle=dashed,dash=1pt 1pt](0,0)(.3,.48)(.8,1.4)(0,2.5)(-.9,3)(-3.5,3.8)
    \rput(0,4.5){\footnotesize overcommutative}
    \rput(0,4.15){\footnotesize varieties}
    \rput(0,3.1){\footnotesize periodic varieties}
    \rput(2,2.5){\footnotesize combinatorial varieties}
    \rput(-2.2,2.6){\footnotesize periodic completely}
    \rput(-2.2,2.25){\footnotesize regular varieties}
    \rput{30}(2,1.6){\scriptsize nilvarieties}
    \rput{-30}(-1.9,1.6){\scriptsize varieties}
    \rput{-30}(-2,1.2){\scriptsize of periodic groups}
    \rput(0,1.4){\tiny varieties of}
    \rput(0,1.1){\tiny bands}
   \end{pspicture}}
  \]
  \caption{The lattice of all semigroup varieties}\label{F:SEM}
 \end{figure}

Accordingly, this lattice can be split into the disjont union
of the ideal $\lat(\mathbf{Per})$ of all periodic varieties
and the coideal consisting of the varieties that include the variety
of all commutative semigroups, called \emph{overcommutative varieties}.

The second lot is easy to settle.
In fact, if a variety of semigroups includes every commutative semigroup,
then it includes $(\N,+)$.
Since this is a finitely generated semigroup which is not $\gJ$-finite,
as noted in Example~\ref{E:monogenic},
we conclude that such a variety is not locally $\gJ$-finite. 

The first, on the contrary, is not at all trivial.
Now, a variety is periodic if and only if it satisfies an identity of the form
$x^m=x^{m+n}$, or equivalently, if it is contained in some
\emph{Burnside variety} $\mathbf{B_{m,n}}=[x^m=x^{m+n}]$,
so it is only natural to begin with these.
Within Burnside varieties, we find the variety $\mathbf{C_2}=[x^2=x^3]$, which
as shown in Example~\ref{E:C2} is not locally $\gJ$-finite.
Since $\mathbf{C_2}=\mathbf{B_{2,1}} \subseteq \mathbf{B_{m,n}}$ whenever $m\ge 2$,
we conclude that all varieties $\mathbf{B_{m,n}}$ with $m\geq 2$ are not locally $\gJ$-finite.
In particular, the ideal $\lat(\mathbf{Comb})$ of $\lat(\mathbf{Per})$
consisting of all combinatorial varieties contains
non locally $\gJ$-finite varieties.

Another ideal of $\lat(\mathbf{Per})$, however,
contains only locally $\gD$-finite varieties:
the ideal $\lat(\mathbf{PCR})$ of all periodic completely regular varieties
(recall that a variety of semigroups consists of completely regular
semigroups if and only of it satisfies an identity of the form $x=x^{n+1}$
for some positive integer $n$).
Since $\lat(\mathbf{PCR})$ can be considered as a sublattice of $\lat(\mathbf{CR})$,
the lattice of all (unary) completely regular semigroups analysed in
Section~\ref{subsec:CR}, the conclusions obtained there can be used here.
In particular, the lattice of all varieties of bands consists only of
locally $\gH$-finite varieties
by Remark~\ref{R:finiteness-hierarchy}~(2),
as this is a locally finite variety.
For the same reason, a few other Burnside varieties,
namely $[x=x^3]$, $[x=x^4]$, $[x=x^5]$ and $[x=x^7]$, are locally $\gH$-finite,
since a variety $\mathbf{B_{1,n}}$ is locally finite
if and only if the corresponding group variety $\mathbf{B_n}=[x^n=1]$
is locally finite \cite{GR52} and this is true, at least,
for $n=2$, $3$, $4$ and $6$
(and false for large odd $n$) \cite{Neu67}.
As for $[x=x^6]$ and in $[x=x^n]$ with $n\ge 8$,
we do not know whether they are strictly locally $\gD$-finite
or in fact locally $\gH$-finite --- only that any of these being
strictly locally $\gD$-finite implies that it cannot be locally finite,
answering in the negative to its Burnside Problem.

Notice that none of the varieties which were found
in Section~\ref{subsec:CR} to be either
strictly locally $\gL$-finite, strictly locally $\gR$-finite
or strictly locally $\gD$-finite belong to $\lat(\mathbf{PCR})$.
Indeed, by Theorem~\ref{T:CRclassification} the only possible candidates
would be varieties $\V$
with either $\mathbf{LRO} \subseteq \V \subseteq \mathbf{ROL^*}$
or $\mathbf{RRO} \subseteq \V \subseteq \mathbf{ROR^*}$
or $\mathbf{RO} \subseteq \V$, respectively.
However, semigroups in $\mathbf{LRO}$ (left regular orthogroups)
need not be periodic;
in fact, left regular orthogroups are semilattices of left groups
and not even a left group needs not be periodic.
(To see this, let $S=L\times G$,
where $L$ is a left zero semigroup and $G$ a group,
be a left group (cf.~Lemma~\ref{L:LG}).
Then, if $x=(a,g) \in S$,
we have $x^k=(a,g^k)\neq x$ unless $G$ is itself periodic.)
Thus, $\mathbf{LRO}$ does not belong to $\lat(\mathbf{PCR})$.
Similarly, nor does $\mathbf{RRO}$.

Moreover, $\lat(\mathbf{PCR})$ and $\lat(\mathbf{Comb})$ contain themselves
an ideal each, respectively, the ideal of all periodic varieties of groups
and the ideal of all \emph{nilvarieties}, that is,
all varieties which satisfy an identity of the form $x^n=0$
for some positive integer $n$.
And if the first of these two consists entirely of locally $\gH$-finite varieties,
in the second we will find locally $\gH$-finite varieties
(as for example the variety $[xy=0]$ of all semigroups with zero multiplication)
along with varieties which are not locally $\gJ$-finite.

\begin{lma}
 The nilvarieties $[x^n=0]$ with $n\geq 2$ are not locally $\gJ$-finite.
\end{lma}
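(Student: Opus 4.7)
The plan is to mimic the argument used for $\mathbf{C_2}$ in Example~\ref{E:C2}: for each $n\geq 2$, exhibit a finitely generated semigroup $T\in [x^n=0]$ having infinitely many $\gJ$-classes. To that end, first fix a finite alphabet $A$ admitting an infinite word $\alpha$ none of whose factors has the form $u^n$ with $u\in A^+$; for $n=2$ the three-letter squarefree word of \cite{MoHe38} suffices, and for $n\geq 3$ Thue's cube-free word over $\{0,1\}$ works. Write $\alpha_m$ for the length-$m$ prefix of $\alpha$.

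Next I would take $T$ to be the free object of the variety $[x^n=0]$ on $A$. Concretely, $T$ is the quotient of the free semigroup with adjoined zero $A^+\cup\{0\}$ modulo the congruence generated by the relations $u^n=0$, $u\in A^+$. A routine argument identifies this quotient with the semigroup whose nonzero elements are the $n$-power-free words of $A^+$, and whose product $w\cdot w'$ equals $ww'$ when $ww'$ is $n$-power-free and equals $0$ otherwise; in particular $T\in [x^n=0]$ and $T$ is generated by $A$, hence finitely generated.

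Since each $\alpha_m$ is $n$-power-free, it corresponds to a distinct nonzero element of $T$. I would then show that $\alpha_m$ and $\alpha_k$ lie in distinct $\gJ$-classes of $T$ whenever $m\neq k$. Assume $m<k$ and $\alpha_m\gJ\alpha_k$; then there exist $p,q\in T\cup\{1\}$ with $\alpha_m=p\alpha_k q$ in $T$. If $p=q=1$, then $\alpha_m=\alpha_k$ as words, contradicting $m<k$. Otherwise the right-hand side either equals $0$ (contradicting $\alpha_m\neq 0$) or is an $n$-power-free word of length strictly greater than $|\alpha_k|>|\alpha_m|$, again impossible. The only delicate point in the whole argument is the normal-form description of $T$, verifying that the congruence generated by $\{u^n=0 : u\in A^+\}$ collapses exactly the words containing some $n$-th power as a factor and leaves all $n$-power-free words pairwise distinct; once this is secured, the rest is immediate, and the existence of the required $n$-power-free infinite words is classical combinatorics on words.
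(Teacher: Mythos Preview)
Your argument is correct and runs along the same lines as the paper's: build a finitely generated semigroup inside the nilvariety whose nonzero elements are certain power-free words, then use a length argument to see that distinct nonzero elements lie in distinct $\gJ$-classes. Two small differences are worth noting. First, the paper begins by observing that $[x^2=0]\subseteq [x^n=0]$ whenever $n\geq 2$, so it suffices to treat the single case $n=2$; this spares you the case split between the three-letter squarefree word and Thue's cube-free word over two letters. Second, instead of taking the relatively free object (whose normal-form description you rightly flag as needing care), the paper works directly with the concrete semigroup $S(\mathcal{W})$ whose nonzero elements are the finite factors of the chosen squarefree word $\alpha$, with product $u\ast v=uv$ if $uv$ is again a factor of $\alpha$ and $0$ otherwise; one checks by inspection that $S(\mathcal{W})\in [x^2=0]$ and is $3$-generated, so no free-object computation is required. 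Either route works, but the paper's is slightly more economical.
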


\begin{proof}
 Since $[x^m=0] \subseteq [x^n=0]$ whenever $m\leq n$,
 it suffices to show that $[x^2=0]$ is not locally $\gJ$-finite.
 
 Let $\mathcal{W}$ be the (infinite) set of factors of Morse and Hedlund's
 infinite square-free word on three letters already mentioned in Example~\ref{E:C2}.
 Let $S(\mathcal{W})$ be the $3$-generated infinite semigroup with zero
 whose nonzero elements are the words in $\mathcal{W}$,
 endowed with the operation 
 $u\ast v=uv$ if $uv\in \mathcal{W}$ and $0$ otherwise,
 for all $u,v \in S(\mathcal{W})$ (see~\cite{Sap14}).
 Since $u\gJ v$ implies that $u$ is a factor of $v$ and conversely,
 we have that $J_u=\{u\}$, for every $u \in S(\mathcal{W})$,
 so that $S(\mathcal{W})$ is not $\gJ$-finite.
 As $S(\mathcal{W})$ satisfies the identity $x^2=0$
 and is finitely generated,
 we conclude that $[x^2=0]$ is not locally $\gJ$-finite.
\end{proof}

A similar argument actually shows a stronger result:

\begin{prop}
 If $\V=[u=0]$, then $\V$ is either locally $\gH$-finite
 or not locally $\gJ$-finite.
\end{prop}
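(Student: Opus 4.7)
The plan is to dichotomize according to whether the word $u=u(x_1,\ldots,x_k)$ is an \emph{avoidable} or \emph{unavoidable} pattern in the sense of combinatorics on words. Recall that $u$ is avoidable if there exist a finite alphabet $A$ and an infinite word $\alpha$ over $A$ no factor of which has the form $u(w_1,\ldots,w_k)$ with $w_1,\ldots,w_k\in A^+$; otherwise $u$ is unavoidable. The Morse--Hedlund word used in the preceding lemma witnesses the avoidability of $x^2$, and is the prototype for the avoidable case.

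Suppose first that $u$ is avoidable, and fix a corresponding alphabet $A$ and infinite word $\alpha$. Let $\mathcal{W}$ denote the (infinite) set of factors of $\alpha$, and form the finitely generated semigroup $S(\mathcal{W})$ with zero exactly as in the preceding lemma: its nonzero elements are the words in $\mathcal{W}$, and $w\ast w'=ww'$ if $ww'\in\mathcal{W}$ and $0$ otherwise. The first step is to check that $S(\mathcal{W})\in\V$: a substitution in which some $s_i=0$ yields $0$ trivially, whereas for a substitution in which every $s_i$ is a nonempty factor of $\alpha$, the concatenation $u(s_1,\ldots,s_k)$ is an instance of $u$ and thus, by avoidability, is not a factor of $\alpha$, hence equals $0$ in $S(\mathcal{W})$. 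Repeating the $\gJ$-class analysis of the preceding lemma shows that each nonzero element of $S(\mathcal{W})$ is its own $\gJ$-class, so $S(\mathcal{W})$ is a finitely generated member of $\V$ with infinitely many $\gJ$-classes, whence $\V$ is not locally $\gJ$-finite.

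Now suppose $u$ is unavoidable. Then for every finite alphabet $A$ there exists $N=N(A,u)\in\N$ such that every word over $A$ of length at least $N$ contains some instance $u(w_1,\ldots,w_k)$ as a factor. Applied to a finitely generated $S\in\V$ with generating set $A$, this says that any product of at least $N$ generators factors through a value of $u$ and is therefore $0$ in $S$; consequently $S$ has only finitely many nonzero elements, is finite, and in particular $\gH$-finite. Hence $\V$ is locally $\gH$-finite.

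The main obstacle I anticipate is the appeal to the pattern-avoidability dichotomy for arbitrary $u$: although the construction of $S(\mathcal{W})$ is a direct generalization of the preceding lemma, one must confirm that the avoidance assumption really translates into the identity $u=0$ holding in $S(\mathcal{W})$ (taking into account that several variables of $u$ may be substituted by overlapping or equal factors of $\alpha$), and dually that the unavoidable case delivers a uniform bound $N(A,u)$ forcing local finiteness. Degenerate cases such as $u$ consisting of a single variable collapse $\V$ to a near-trivial variety and are easily dispatched.
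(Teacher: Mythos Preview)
Your argument is correct and, at its core, coincides with the paper's: both show that when $[u=0]$ is not locally finite one can exhibit a finitely generated member in which every nonzero element is its own $\gJ$-class. The packaging, however, is different. The paper dichotomizes on ``$\V$ locally finite versus not'' and, in the second case, works directly with the relatively free object $F_n(\V)$, observing that for nonzero $v\tau,w\tau$ the relation $v\tau\,\gJ\,w\tau$ forces each of $v,w$ to be a factor of the other, hence $v=w$. You instead invoke the equivalent dichotomy ``$u$ unavoidable versus avoidable'' (these are the same split: by K\H{o}nig's lemma, $u$ is unavoidable over $A$ iff only finitely many words over $A$ avoid $u$, iff $F_{|A|}([u=0])$ is finite) and in the avoidable case build the factor semigroup $S(\mathcal W)$ of a single infinite $u$-avoiding word. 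Your $S(\mathcal W)$ is a quotient of the paper's $F_{|A|}(\V)$, and the $\gJ$-class analysis is identical. What your route buys is a clean link to combinatorics on words; what the paper's route buys is that no external avoidability machinery is named, only the free object.

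Your two stated worries are not obstacles. For the first: once all $s_i$ are nonzero words, the concatenation $u(s_1,\ldots,s_k)$ is by definition an instance of the pattern $u$, irrespective of how the $s_i$ sit inside $\alpha$; since $\alpha$ avoids $u$, this word is not a factor of $\alpha$, so the $S(\mathcal W)$-product is $0$. For the second: the uniform bound $N(A,u)$ in the unavoidable case is exactly the standard compactness/K\H{o}nig argument (an infinite set of $u$-avoiding words over $A$ would yield an infinite $u$-avoiding word). One small point worth making explicit is that $S(\mathcal W)$ is genuinely finitely generated (by the letters occurring in $\alpha$) and that $0$ lies in this subsemigroup, since e.g.\ $a^{|u|}=u(a,\ldots,a)\notin\mathcal W$ for any letter $a$.
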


\begin{proof}
 If $\V$ is locally finite, then $\V$ is locally $\gH$-finite by
 Remark~\ref{R:finiteness-hierarchy}~(2).
 So suppose $\V$ is not locally finite,
 and let $S$ be an infinite $n$-generated semigroup from $\V$.
 By the universal property of free semigroups,
 we have that $F_n(\V)$, the $n$-generated free semigroup on $\V$, is infinite as well.
 Let $X$ be an alphabet with $n$ letters.
 
 Since we already know that the varieties $[x^n=0]$ are not locally $\gJ$-finite,
 we may assume that $u$ is a word in $m\geq 2$ letters, say, $u_1,\ldots,u_m$,
 of length $l\geq 3$, given that $[xy=0]$ is locally finite;
 write $u=u_{i_1}\ldots u_{i_l}$.
 Then $F_n(\V)=X^+/\tau$,
 where, for all $v,w\in X^+$, we have $(v,w)\in \tau$ if and only if
 $v=w$ or there exist a factor $v'$ of $v$, a factor $w'$ of $w$
 and maps $\pi_v,\pi_w \colon \{u_1,\ldots,u_m\} \to X^+$
 such that $v'=u_{i_1}\pi_v\ldots u_{i_l}\pi_v$ and $w'=u_{i_1}\pi_w\ldots u_{i_l}\pi_w$.
 
 We claim that $F_n(\V)$ is not $\gJ$-finite.
 Let $v\tau,w\tau \in F_n(\V)$ be nonzero elements,
 that is, neither $v$ nor $w$ contain factors of the form $u_{i_1}\pi\ldots u_{i_l}\pi$
 for some mapping $\pi \colon \{u_1,\ldots,u_m\} \to X^+$.
 In particular, neither $v$ nor $w$ are themselves of this form.
 Suppose $(v\tau,w\tau) \in \gJ$.
 Then there exist $p\tau,q\tau,r\tau,s\tau \in (F_n(\V))^1$ such that
 $v\tau=p\tau\,w\tau\,q\tau$ and $w\tau=r\tau\,v\tau\,s\tau$.
 Assuming $v\tau\neq w\tau$, we have that
 $p\tau$ and $q\tau$ cannot be both $1$ (and $r\tau$ and $s\tau$ cannot be both $1$).
 Thus, $v\tau=(pw)\tau$ or $v\tau=(wq)\tau$ or $v\tau=(pwq)\tau$
 and, in either case, the fact that $v\tau$ is a nonzero element in $F_n(\V)$
 implies that $w$ is a factor of $v$.
 Similarly, from $w\tau=r\tau\,v\tau\,s\tau$ we conclude that $v$ is a factor of $w$.
 Therefore, $v=w$, a contradiction.
 Hence, $J_{v\tau}=\{v\tau\}$ for every nonzero $v\tau \in F_n(\V)$.
 As $F_n(\V)$ is infinite, we conclude that it is not $\gJ$-finite.
\end{proof}

Note that the proof of the previous result shows that, in fact,
a variety $[u=0]$ is either locally finite or not locally $\gJ$-finite.


%

\section*{Acknowledgements}

The first author was partially supported by CMUP (UID/MAT/00144/2013),
which is funded by FCT --- Funda\c c\~ao para a Ci\^encia e a Tecnologia (Portugal)
with national (MEC) and European structural funds (FEDER),
under the partnership agreement PT2020.
The second author acknowledges support of the National Funding from FCT,
under the project UID/MAT/4621/2013 (CEMAT-Ci\^encias).


\begin{thebibliography}{99}

\bibitem{Bez01} \textsc{G.~Bezhanishvili},
 \emph{Locally finite varieties},
 Algebra Univers., {\bf 46} (2001), 531--548.

\bibitem{Bropp} \textsc{T.~Brough},
 \emph{Inverse semigroups with rational word problem are finite},
 arXiv: 1311.3955v1 [math.GR].

\bibitem{CliPre67} \textsc{A.~H.~Clifford}, \textsc{G.~B.~Preston}.
 ``The Algebraic Theory of Semigroups'',
 Mathematical Surveys and Monographs, Volume 7, Part 2,
 American Mathematical Society, 1961.


\bibitem{GR52} \textsc{J.~A.~Green}, \textsc{D.~Rees},
 \emph{On semigroups in which $x^r=x$},
 Proc. Cambr. Phil. Soc., {\bf 48} (1952), 35--40.
 
\bibitem{How95} \textsc{J.~M.~Howie}. ``Fundamentals of Semigroup Theory'',
 Clarendon Press, Oxford, 1995.

\bibitem{Kad92} \textsc{J.~Ka\v dourek}.
 \emph{On Varieties of Combinatorial Inverse Semigroups II},
 Semigroup Forum, {\bf 44} (1992), 53--78.



\bibitem{Mal70} \textsc{A.~Malcev},
 ``Algebraicheskie Systemi'' (in Russian),
 Nauka Press, Moscow, 1970.
 English translation: ``Algebraic Systems'',
 Academie-Verlag, Berlin, 1973.

\bibitem{MoHe38} \textsc{M.~Morse}, \textsc{G.~A.~Hedlund},
 \emph{Symbolic dynamics},
 American J. Math., {\bf 60} (1938), 815--866.
 
\bibitem{Mun74} \textsc{W.~D.~Munn},
 \emph{Free inverse semigroups},
 Proc. London Math. Soc.~(3), {\bf 29} (1974), 385--404.
 
\bibitem{Neu67} \textsc{B.~H.~Neumann},
 \emph{Varieties of groups},
 Bull. Amer. Math. Soc., {\bf 73} (1967), 603--613.
 
\bibitem{Pet84} \textsc{M.~Petrich},
 ``Inverse Semigroups'',
 Pure and Applied Mathematics,
 John Wiley \& Sons, 1984.
 
\bibitem{PetRei84} \textsc{M.~Petrich}, \textsc{N.~Reilly},
 \emph{The join of the varieties of strict inverse semigroups and rectangular bands},
 Glasgow Math. J., {\bf 25} (1984), 59--74.
 
\bibitem{PetRei99} \textsc{M.~Petrich}, \textsc{N.~Reilly},
 ``Completely Regular  Semigroups'',
 Canadian Mathematical Society series of monographs and advanced texts
 {\bf 23}, John Wiley \& Sons, 1999.


\bibitem{Sap14} \textsc{M.~V.~Sapir},
 ``Combinatorial Algebra: Syntax and Semantics'',
 Springer Monographs in Mathematics,
 Springer Verlag, 2014.
 

\bibitem{SVV09} \textsc{L.~N.~Shevrin}, \textsc{B.~M.~Vernikov}, \textsc{M.~V.~Volkov},
 \emph{Lattices of Semigroup Varieties},
 Russian Mathematics (Iz.~VUZ), {\bf 50}~No.~3 (2009), 1--28.
 
\bibitem{JBS90} \textsc{J.~B.~Stephen},
 \emph{Presentation of inverse monoids},
 J.~Pure Appl. Algebra, {\bf 63} (1990), 81--112.
 

\end{thebibliography}
\end{document}